\newtheorem{theorem}{Theorem}[section]
\newtheorem{lemma}[theorem]{Lemma}
\newtheorem{proposition}[theorem]{Proposition}
\newtheorem{corollary}[theorem]{Corollary}
 \newcommand{\ho}{\mathrm{Hom}}
  \newcommand{\tr}{\mathrm{Tr}}
   \newcommand{\rank}{\mathrm{rank}}
 \author{Yang Huang}
\address{Yang Huang, School of Mathematics and Statistics, 
Central South University, 
Changsha, Hunan, 410083, P.R. China}
\email{\url{FairyHuang@csu.edu.cn}}
\author{Yongtao  Li}
\address{ Yongtao Li, College of Mathematics and Econometrics, 
Hunan University, 
Changsha, Hunan, 410082, P.R. China}
\email{\url{ytli0921@hnu.edu.cn} }
\author{Weijun Liu}
\address{Weijun Liu, School of Mathematics and Statistics, 
Central South University, 
Changsha, Hunan, 410083, P.R. China}
\email{\url{wjliu6210@126.com}}
\author{Lihua Feng}
\address{Lihua Feng, School of Mathematics and Statistics, 
Central South University, 
Changsha, Hunan, 410083, P.R. China}
\email{\url{fenglh@163.com}}
\keywords{Block matrices; Positive semidefinite; 
Positive partial transpose; Partial trace;    }
\subjclass[2010]{05C50,05C25}
\date{\today}
\begin{document}

\title[Properties]{Some properties for morphism of representations}

\begin{abstract}
Let $\psi : G\to GL(V)$ and $\varphi :G \to GL (W)$ be representations of finite group $G$. 
A linear map $T: V\to W$ is called a morphism from $\psi$ to $\varphi$ if it
satisfys $T\psi_g= \varphi_g T$ for each $g\in G$ 
and let $\ho_G (\psi ,\varphi)$ denote the set of all morphisms.    
In this paper, we make full stufy of the subspace $\ho_G(\psi, \varphi)$. 
As byproducts, we  include the proof of the first orthogonality relation and 
Schur's orthogonality relation. 
\end{abstract}
\maketitle

\section{Introduction}
\label{sec1}

The main goal of  finite group representation theory 
is to study groups via their actions on some specific vector spaces, 
see, e.g., \cite{FH91, JL01, Se77} for details.  
Consideration of finite groups acting on finite sets leads to 
such significant results as the Sylow theorems, and also arises 
increasing concerns to the study of permutation group theory.  
By investigating group actions on vector spaces even more 
pellucid informations about a group can be obtained, 
such as Burnside's $pq$-theorem \cite{DG70}. 
This is the main subject of  representation theory. 
Meanwhile, vector spaces and linear operators are main topics 
in the study of linear algebra.   
Our study concentrating on matrix representations of groups will 
lead us naturally to the study of complex-valued functions and even 
Fourier analysis on a group.  
This in turn has extensive applications to other disciplines like graph theory, 
functional analysis and probability, just to name a few. 

We now review some basic definitions and notations of 
representation theory which will be used later. 
Let $G$ be a finite group and $V$ be a finite-dimensional vector space 
over the complex field $\mathbb{C}$. 
We define the {\it general linear group} $GL(V)$ to be the group of 
all invertible linear transformations on $V$ and 
$GL_n(\mathbb{C})$ to be the group of all invertible matrices 
over $\mathbb{C}$ of order $n$. 
We use notation $1_V$ to mean the {\it identity linear transformation} of $V$,  
and $I_n$ to denote the identity matrix of order $n$, 
or simply by $I$ if no confusion is possible.  
Let $M_{m\times n}(\mathbb{C})$ denote by the space of $m\times n$ complex matrices 
and $U_n(\mathbb{C})$ denote by the space of $n\times n$ unitary matrices. 
$A^*$ stands for the conjugate transpose of matrix $A$.
A {\it representation} of $G$ on $V$ is a group homomorphism 
$\rho : G\to GL(V)$. The dimension of $V$ is called the {\it degree} 
of $\rho$ and denoted by $\mathrm{deg}\, \rho$. 
We also say that $V$ is the {\it representation space} 
of $G$ with respect to the representation $\rho$ and 
usually write $\rho_g$ for $\rho (g)$ simply. 
If $\rho_g$ is an unitary matrix for each $g\in G$, we then say that 
$\rho$ is an unitary matrix representation. 
Two representations $\psi : G\to GL(V)$ and $\varphi : 
G\to GL(W)$ are said to be {\it equivalent} or 
{\it isomorphic}, written as $\psi \sim \varphi$,  
if there exists a linear isomorphism $T: V\to W$ such that 
$\varphi_g =T\psi_g T^{-1}$ for all $g\in G$, i.e., 
$\varphi_g T=T\psi_g$ for all $g\in G$. In pictures, 
we have that the diagram 
\[ \begin{CD}
V @>\psi_g >> V \\
@VTVV    @VVTV \\
W @>>\varphi_g>  W
\end{CD} \]
commutes for all $g\in G$.  

Let $\rho : G\to GL(V)$ be a representation. 
A subspace $W$ of $V$ is called {\it $G$-invariant under $\rho$} 
if for all $w\in W$ and $g\in G$, one has $\rho_g(w)\in W$. 
If $W$ is a $G$-invariant subspace under $\rho$, 
we can restrict $\rho$ on $W$ and obtain a new representation 
$\rho |_W: G\to GL(W)$ by setting $(\rho |_{W})_g(w)=\rho_g(w)$ 
for every  $w\in W$. 
We say $\rho |_W$ is a {\it subrepresentation} of $\rho$.  
Trivially, $\{0\}$ and $V$ are always 
$G$-invariant subspaces. 
If the only $G$-invariant subspace of $V$ are $\{0\}$ and $V$, 
we say that $\rho$ is an {\it irreducible representation}.  
The {\it character} $\chi_{\rho}: G \to \mathbb{C}$ 
of $\rho$ is defined by setting $\chi_{\rho}(g)=\tr (\rho_g)$ 
for $g\in G$, where $\tr (\rho_g)$ is the trace of the representation matrix 
of $\rho_g$ with respect to a specified basis of $V$. 
The {\it degree} of the character $\chi_{\rho}$ is defined 
to be the degree of $\rho$, which also equals $\chi_{\rho}(1)$.  
The character of an irreducible representation is called an 
{\it irreducible character}. 
It is easy to see that $\chi_{\psi}=\chi_{\varphi}$ if $\psi \sim \varphi$, 
because $\chi_{\psi}(g)=\tr (\psi_g)=\tr (T^{-1}\varphi_gT)=\tr (\varphi_g)=\chi_{\varphi}(g)$.

Let $G$ be a group and define $L(G)=\mathbb{C}^G=\{f \,|\, f:G\to \mathbb{C}\}$. 
It is well known that $L(G)$ is an inner product space with usual addition and 
scalar multiplication, and inner product defined by 
$ \langle f_1,f_2 \rangle =\frac{1}{|G|}\sum_{g\in G} f_1(g)\overline{f_2(g)}. $
Let $\psi : G\to GL(V)$ and $\varphi :G\to GL(W)$ be representations 
of finite group $G$.  
A {\it morphism} from $\psi$ to $\varphi$ is a linear map $T: V\to W$ such that 
$T\psi_g =\varphi_gT $ for all elements $g\in G$. 
Let $\ho_G(\psi ,\varphi)$ denote by the set of all morphisms from $\psi$ 
to $\varphi$ and $\ho (V,W)$ denote by the set of all 
linear maps from $V$ to $W$, see \cite{BS12} for more details.

In this paper, we first study and present some useful properties of the set 
$\ho_G (\psi ,\varphi)$, 
and then extend the Schur lemma. At the end of the paper, 
we include the proof of the first orthogonality relation and 
Schur's orthogonality relation.

\newpage 
\section{Properties of Morphism Space $\ho_G (\psi, \varphi)$} 

We first introduce some lemmas for latter use. 

\begin{lemma}[{\cite[p. 20]{BS12}}]  \label{lem21}
Every representation of a finite group $G$ is equivalent to 
a unitary matrix representation. 
\end{lemma}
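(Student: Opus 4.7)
The plan is to use the classical averaging (Weyl unitarian) trick: we manufacture a new inner product on $V$ with respect to which every operator $\rho_g$ becomes unitary, and then a choice of orthonormal basis realizes $\rho$ as a unitary matrix representation equivalent to the original one.

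First I would fix an arbitrary Hermitian inner product $\0{\cdot}{\cdot}$ on $V$ (such a product exists because $V$ is a finite-dimensional complex vector space; for instance pick any basis and declare it orthonormal). Then I would define
\[
 \1{u}{v} \;=\; \frac{1}{|G|}\sum_{g\in G}\0{\rho_g(u)}{\rho_g(v)}, \qquad u,v\in V.
\]
I would check routinely that $\1{\cdot}{\cdot}$ is sesquilinear, conjugate-symmetric, and positive definite (the last because each summand is non-negative and the $g=1$ term equals $\0{u}{u}$, which is positive when $u\neq 0$). The key computation is that for any fixed $h\in G$, a reindexing $g\mapsto gh$ gives
\[
 \1{\rho_h(u)}{\rho_h(v)} \;=\; \frac{1}{|G|}\sum_{g\in G}\0{\rho_{gh}(u)}{\rho_{gh}(v)} \;=\; \1{u}{v},
\]
so every $\rho_h$ preserves the new inner product, i.e.\ $\rho_h$ is unitary with respect to $\1{\cdot}{\cdot}$.

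Next I would translate this operator-theoretic statement into a statement about matrix representations. Let $\mathcal{B}_0=\{e_1,\dots,e_n\}$ be an orthonormal basis of $V$ with respect to the original inner product $\0{\cdot}{\cdot}$, and let $\mathcal{B}_1=\{f_1,\dots,f_n\}$ be an orthonormal basis with respect to the new inner product $\1{\cdot}{\cdot}$ (produced, e.g., by Gram--Schmidt applied to $\mathcal{B}_0$ in the $\1{\cdot}{\cdot}$ inner product). Let $[\rho_g]_{\mathcal{B}_0}$ and $[\rho_g]_{\mathcal{B}_1}$ denote the matrix of $\rho_g$ in these respective bases, and let $T$ be the change-of-basis linear map sending $f_i$ to $e_i$. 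Then $[\rho_g]_{\mathcal{B}_0}=T[\rho_g]_{\mathcal{B}_1}T^{-1}$ for all $g\in G$, which shows that $\rho$ is equivalent (as a representation) to $g\mapsto [\rho_g]_{\mathcal{B}_1}$. Since $\rho_g$ preserves $\1{\cdot}{\cdot}$ and $\mathcal{B}_1$ is orthonormal for that inner product, the matrices $[\rho_g]_{\mathcal{B}_1}$ lie in $U_n(\mathbb{C})$, completing the proof.

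The only delicate step is keeping the equivalence of \emph{representations} (not just of operators) straight: one has to package the intertwiner $T$ so that $T\rho_g = \rho_g' T$ holds uniformly in $g$, which the change-of-basis map above achieves. The rest is bookkeeping with sums over $G$, which is finite so no convergence issues arise.
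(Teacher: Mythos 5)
Your proof is correct. The paper does not prove this lemma at all --- it is quoted from Steinberg \cite[p.~20]{BS12} --- and your averaging argument (define $\langle u,v\rangle_1=\frac{1}{|G|}\sum_{g\in G}\langle\rho_g u,\rho_g v\rangle$, check invariance by reindexing, then take a $\langle\cdot,\cdot\rangle_1$-orthonormal basis and use the coordinate map as the intertwiner) is exactly the standard unitarian-trick proof given in that reference, so your write-up supplies precisely the argument the paper leaves to the citation.
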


\begin{lemma}[Maschke]
Every representation of a finite group is completely reducible, i.e., 
if $\rho :G\to GL(V)$ is a representation of a finite group $G$,  then 
there exists a decomposition 
$V=V_1\oplus V_2 \oplus \cdots \oplus V_t$ such that 
$V_i$ is $G$-invariant subspace and the subrepresentation $\rho|_{V_i}$ is irreducible 
for all $i=1,2,\ldots ,t$. 
\end{lemma}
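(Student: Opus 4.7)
The plan is to reduce to the unitary case via Lemma~\ref{lem21} and then induct on $\dim V$. Equivalent representations have the same lattice of $G$-invariant subspaces, because any intertwining isomorphism $T$ maps an invariant subspace bijectively to an invariant subspace and intertwines the two restrictions, preserving irreducibility. Consequently, it is enough to prove the theorem under the additional assumption that $\rho$ is a unitary matrix representation, that is, that $V=\mathbb{C}^n$ is equipped with an inner product with respect to which every $\rho_g$ lies in $U_n(\mathbb{C})$.

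Now I would argue by induction on $n=\dim V$. The case $n=1$ is immediate since a one-dimensional representation is irreducible, so we may take $t=1$ and $V_1=V$. For the inductive step, if $\rho$ is already irreducible, the same trivial decomposition works. Otherwise, there exists a proper nonzero $G$-invariant subspace $W\subsetneq V$, and I would take $W^{\perp}$ relative to the invariant inner product. The key point is that $W^{\perp}$ is also $G$-invariant: for any $v\in W^{\perp}$, $w\in W$, and $g\in G$, the unitarity of $\rho_g$ yields
\[
\langle \rho_g v,\,w\rangle=\langle v,\,\rho_g^{*}w\rangle=\langle v,\,\rho_{g^{-1}}w\rangle=0,
\]
since $\rho_{g^{-1}}w\in W$. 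Hence $V=W\oplus W^{\perp}$ is a decomposition into $G$-invariant subspaces of strictly smaller dimension, and applying the inductive hypothesis to the subrepresentations $\rho|_W$ and $\rho|_{W^{\perp}}$ and concatenating the resulting decompositions produces the desired $V=V_1\oplus\cdots\oplus V_t$ with each $\rho|_{V_i}$ irreducible.

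The main obstacle is producing a $G$-invariant complement of an arbitrary invariant subspace, as a naive algebraic complement need not be preserved by $\rho$. This is precisely where the finiteness of $G$ enters, and it is handled once and for all by Lemma~\ref{lem21}, whose standard proof averages any Hermitian inner product over $G$ via $\langle v,w\rangle_{\mathrm{new}}=\frac{1}{|G|}\sum_{g\in G}\langle \rho_g v,\rho_g w\rangle_{\mathrm{old}}$. An alternative self-contained route, avoiding the detour through unitarity, is to start from any linear projection $\pi\colon V\to W$ and replace it by the averaged map $\tilde{\pi}=\frac{1}{|G|}\sum_{g\in G}\rho_g\pi\rho_{g^{-1}}$; one checks that $\tilde{\pi}$ is a $G$-equivariant projection onto $W$, so $\ker \tilde{\pi}$ is an invariant complement, and the inductive step proceeds identically.
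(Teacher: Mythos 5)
Your proof is correct: the reduction via Lemma~\ref{lem21} to a unitary representation, the observation that an intertwining isomorphism transports invariant subspaces, irreducibility, and direct-sum decompositions, the unitarity computation showing $W^{\perp}$ is $G$-invariant, and the induction on $\dim V$ are all sound, and the averaged-projection variant you sketch at the end is a valid alternative. Note that the paper itself states Maschke's theorem without proof (it is quoted as a known result alongside Lemma~\ref{lem21}), so there is no internal argument to compare against; your write-up is the standard textbook proof and fits naturally with the unitarization lemma the paper does supply.
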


The following Proposition \ref{prop24} is easy to prove by basic linear algebra techniques. 
For completeness, we include its proof below.

\begin{proposition} \label{prop24}
Let $\psi : G\to GL(V)$ and $\varphi :G\to GL(W)$ be representations.  
Then $\ho_G(\psi, \varphi )$ is a subspace of $\ho (V,W)$. 
Furthermore, if $T\in \ho_G(\psi ,\varphi)$, then 
$\mathrm{Ker}(T)$ is a $G$-invariant subspace of $V$ under $\psi$ and 
$T(V)=\mathrm{Im}(T)$ is a $G$-invariant subspace of $W$ under $\varphi$. 
\end{proposition}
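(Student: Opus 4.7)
The plan is to verify three separate but closely related claims, each of which reduces to one routine application of the intertwining identity $T\psi_g=\varphi_g T$ together with the linearity of the maps involved.

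For the subspace claim, I would first observe that the zero map $0:V\to W$ trivially satisfies $0\cdot \psi_g=0=\varphi_g\cdot 0$, hence lies in $\ho_G(\psi,\varphi)$, so the set is nonempty. Then, given $S,T\in \ho_G(\psi,\varphi)$ and scalars $a,b\in\mathbb{C}$, I would compute
\[
(aS+bT)\psi_g = aS\psi_g+bT\psi_g = a\varphi_g S+b\varphi_g T = \varphi_g(aS+bT)
\]
for every $g\in G$, using the hypothesis on $S$ and $T$; this shows $aS+bT\in\ho_G(\psi,\varphi)$, confirming that $\ho_G(\psi,\varphi)$ is a linear subspace of $\ho(V,W)$.

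For the kernel claim, I would fix $T\in \ho_G(\psi,\varphi)$ and $v\in\mathrm{Ker}(T)$, and then for arbitrary $g\in G$ compute
\[
T(\psi_g(v)) = (T\psi_g)(v) = (\varphi_g T)(v) = \varphi_g(T(v)) = \varphi_g(0) = 0,
\]
which shows $\psi_g(v)\in\mathrm{Ker}(T)$, establishing $G$-invariance of $\mathrm{Ker}(T)$ under $\psi$. For the image claim, I would take an arbitrary element $w=T(v)\in T(V)$ and compute
\[
\varphi_g(w) = \varphi_g(T(v)) = (\varphi_g T)(v) = (T\psi_g)(v) = T(\psi_g(v)) \in T(V),
\]
which gives $G$-invariance of $\mathrm{Im}(T)$ under $\varphi$.

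There is no genuine obstacle here: each verification is a one-line manipulation that swaps $T\psi_g$ for $\varphi_g T$ (or vice versa). The only thing worth flagging is the mild care needed with direction of composition, namely applying the intertwining relation on the correct side when passing from kernel-invariance (push $T$ inside a $\psi_g$) to image-invariance (pull $T$ outside past a $\varphi_g$).
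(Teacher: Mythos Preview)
Your proof is correct and follows essentially the same approach as the paper: both verify closure under linear combinations via the intertwining identity, and both check $G$-invariance of $\mathrm{Ker}(T)$ and $\mathrm{Im}(T)$ by a single application of $T\psi_g=\varphi_g T$. The only cosmetic difference is that you explicitly note the zero map lies in $\ho_G(\psi,\varphi)$, which the paper leaves implicit.
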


\begin{proof}
Let $T_1,T_2\in \ho_G(\psi , \varphi )$ and $c_1,c_2\in \mathbb{C}$. Then for each $g\in G$, 
\[ (c_1T_1+c_2T_2)\psi_g =
c_1T_1\psi_g +c_2T_2\psi_g =
c_1\varphi_gT_1 +c_2\varphi_gT_2=\varphi_g(c_1T_1 +c_2T_2), \]
and hence $c_1T_1 +c_2T_2 \in \ho_G(\psi ,\varphi )$, as required.  

Moreover, if $T\in \ho_G(\psi ,\varphi)$, let $v \in \mathrm{Ker}(T)$ and $g\in G$. 
Then $T\psi_g v=\varphi_g T v=0$ 
since $v\in \mathrm{Ker}(T)$. 
Hence $\psi_g v\in \mathrm{Ker}(T)$, we conclude that $\mathrm{Ker}(T)$ is $G$-invariant subspace of 
$V$ under $\psi$. 
Now let $w\in \mathrm{Im}(T)$, say $w=Tv$ for some $v\in V$. 
Then $\varphi_g w=\varphi_g Tv=T\psi_g v\in \mathrm{Im}(T)$, 
it means that $\mathrm{Im}(T)$ is $G$-invariant. 
\end{proof}

\begin{proposition} \label{lem31}
If $\varphi \sim \psi$, then $\ho_G(\varphi, \rho) $ is isomorphic to $ \ho_G(\psi, \rho)$. 
\end{proposition}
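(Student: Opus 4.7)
The plan is to construct an explicit linear isomorphism between the two morphism spaces directly from the intertwiner that witnesses $\varphi \sim \psi$. Since the equivalence $\varphi \sim \psi$ means there is a linear isomorphism $S : V_\varphi \to V_\psi$ satisfying $\psi_g S = S\varphi_g$ for all $g \in G$, the natural candidate for the desired isomorphism is the right-composition map
\[
\Phi : \ho_G(\psi,\rho) \longrightarrow \ho_G(\varphi,\rho), \qquad T \longmapsto T\circ S.
\]

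First I would verify that $\Phi$ is well defined, i.e., that $T\circ S$ really lies in $\ho_G(\varphi,\rho)$ whenever $T \in \ho_G(\psi,\rho)$. This is a one-line calculation: using $S\varphi_g = \psi_g S$ and $T\psi_g = \rho_g T$, one gets $(TS)\varphi_g = T(\psi_g S) = (\rho_g T)S = \rho_g(TS)$. Linearity of $\Phi$ is immediate from bilinearity of composition.

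Next I would produce the inverse. The obvious candidate is $\Psi(T') = T'\circ S^{-1}$, and to show it lands in $\ho_G(\psi,\rho)$ one first rewrites the intertwining identity as $S^{-1}\psi_g = \varphi_g S^{-1}$ (by inverting $S$ in $\psi_g S = S\varphi_g$) and then computes $(T'S^{-1})\psi_g = T'(\varphi_g S^{-1}) = \rho_g(T'S^{-1})$. The relations $\Phi\circ\Psi = \mathrm{id}$ and $\Psi\circ\Phi = \mathrm{id}$ follow from $S S^{-1} = 1_{V_\psi}$ and $S^{-1}S = 1_{V_\varphi}$, so $\Phi$ is a linear isomorphism.

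There is no real obstacle here: the argument is a direct unpacking of the definitions, and the only thing one must be a little careful about is getting the order of composition correct so that $\Phi$ goes in the direction claimed. (If one had written $\Phi(T) = S\circ T$ instead, one would be mapping into $\ho_G(\rho,\psi)$, not $\ho_G(\varphi,\rho)$.) Because $S$ is an isomorphism, the inverse map $\Psi$ is automatically available, so bijectivity requires no separate rank/dimension argument.
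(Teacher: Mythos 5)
Your proposal is correct and follows essentially the same route as the paper: both take the intertwiner $S$ witnessing $\varphi\sim\psi$ and define the isomorphism $\ho_G(\psi,\rho)\to\ho_G(\varphi,\rho)$ by right-composition $T\mapsto TS$, checking well-definedness by the same one-line intertwining computation. The only difference is cosmetic: you exhibit the inverse $T'\mapsto T'S^{-1}$ explicitly, whereas the paper simply notes that invertibility of $S$ makes the map bijective.
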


\begin{proof}
Since $\varphi \sim \psi$, 
there is an isomorphism linear map $T$ such that 
$\psi_g T =T\varphi_g$ for every $g\in G$.  
Define a map $\pi : \ho_G(\psi , \rho) \to \ho_G(\varphi , \rho)$ by 
$\pi (F)= FT$ for every $F \in \ho_G(\psi , \rho) $.  
We need to prove that $\pi$ is an isomorphism. 
Firstly, the map $\pi $ is well-defined, because 
$\rho_g (FT)=(\rho_g F)T=(F\psi_g) T=F(\psi_g T)=F(T\varphi_g)=(FT)\varphi_g$, 
which means $\pi (F) =FT\in \ho_G(\varphi , \rho)$.  
As $T$ is an invertible linear map, 
it is easy to get that $\pi$ is a linear injection and surjection, i.e., 
the  map $\pi$ is an isomorphism. 
\end{proof} 

By making full use of Proposition \ref{prop24}, 
Schur proved that 

\begin{lemma}[Schur's lemma] \label{schur}
Let $\psi ,\varphi$ be irreducible representations of a finite group $G$ 
and $T\in \ho_G(\psi ,\varphi)$. Then 
either $T$ is invertible or $T=0$. Consequently, we get \\
(1) If $\psi \nsim \varphi$, then $\ho_G (\psi,\varphi)=\{0\}$; \\
(2) if $\psi =\varphi $, then $T=c 1_V$ for some $c\in \mathbb{C}$. 
\end{lemma}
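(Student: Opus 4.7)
The plan is to exploit Proposition~\ref{prop24} directly: for any morphism $T\in \ho_G(\psi,\varphi)$, the subspace $\mathrm{Ker}(T)\subseteq V$ is $G$-invariant under $\psi$, and $\mathrm{Im}(T)\subseteq W$ is $G$-invariant under $\varphi$. Because $\psi$ is irreducible, $\mathrm{Ker}(T)\in \{\{0\},V\}$; because $\varphi$ is irreducible, $\mathrm{Im}(T)\in \{\{0\},W\}$. First I would dispose of the trivial branch $\mathrm{Ker}(T)=V$, which immediately gives $T=0$. Otherwise $\mathrm{Ker}(T)=\{0\}$, so $T$ is injective, which forces $\mathrm{Im}(T)\neq\{0\}$, hence $\mathrm{Im}(T)=W$ by irreducibility of $\varphi$. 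Thus $T$ is bijective, i.e.\ invertible. This gives the dichotomy.

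For consequence~(1), I would argue by contrapositive: if $\ho_G(\psi,\varphi)\neq \{0\}$, pick a nonzero $T$, which by the dichotomy must be invertible. Then the relation $T\psi_g=\varphi_g T$ rearranges to $\varphi_g=T\psi_g T^{-1}$ for every $g\in G$, which is precisely $\psi\sim \varphi$. So $\psi\nsim\varphi$ forces $\ho_G(\psi,\varphi)=\{0\}$.

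For consequence~(2), the point is to use that the ground field is $\mathbb{C}$. Let $T\in \ho_G(\psi,\psi)$ and pick any eigenvalue $c\in \mathbb{C}$ of $T$ (such $c$ exists because $\mathbb{C}$ is algebraically closed and $V$ is finite-dimensional). The identity $1_V$ trivially lies in $\ho_G(\psi,\psi)$, and since Proposition~\ref{prop24} shows that $\ho_G(\psi,\psi)$ is a subspace, $T-c\,1_V\in \ho_G(\psi,\psi)$ as well. By construction $T-c\,1_V$ has nonzero kernel (any eigenvector for $c$), so it cannot be invertible; the dichotomy just proved then forces $T-c\,1_V=0$, i.e.\ $T=c\,1_V$.

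The only genuinely non-formal point is the appeal to an eigenvalue of $T$ over $\mathbb{C}$ in step~(2); everything else is a direct application of Proposition~\ref{prop24} and the definition of irreducibility. I do not anticipate any real obstacle, since the proof is essentially a bookkeeping exercise once the invariance of $\mathrm{Ker}(T)$ and $\mathrm{Im}(T)$ is in hand.
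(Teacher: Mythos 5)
Your proof is correct and follows exactly the route the paper indicates (it states the lemma without a written proof, remarking only that it follows from Proposition~\ref{prop24}): invariance of $\mathrm{Ker}(T)$ and $\mathrm{Im}(T)$ plus irreducibility gives the dichotomy, and the eigenvalue argument over $\mathbb{C}$ handles part~(2). No gaps; this is the standard argument the paper has in mind.
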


By Lemma \ref{lem31} and Schur's Lemma \ref{schur}, 
we can get the following corollary. 

\begin{corollary} \label{cor32}
Let $\psi ,\varphi$ be irreducible representations of a finite group $G$.  Then 
\[ \dim \ho_G (\psi, \varphi )=\begin{cases} 
0,& \psi \nsim \varphi ,\\
1,& \psi \sim \varphi . 
\end{cases} \]
\end{corollary}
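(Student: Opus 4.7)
The plan is to split into the two cases stated and simply invoke Schur's Lemma together with Proposition \ref{lem31}, so there is essentially no new content to prove.

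For the first case, suppose $\psi \nsim \varphi$. I would apply part (1) of Schur's Lemma directly: it says $\ho_G(\psi,\varphi)=\{0\}$, so $\dim \ho_G(\psi,\varphi)=0$, as required.

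For the second case, suppose $\psi\sim \varphi$. Here I would use Proposition \ref{lem31} to transport the problem to an endomorphism space. Concretely, since $\psi\sim\varphi$ the proposition (applied with $\rho:=\varphi$, and with the roles of $\psi,\varphi$ in the proposition's statement being played by $\varphi,\psi$) yields a vector space isomorphism $\ho_G(\varphi,\varphi)\cong \ho_G(\psi,\varphi)$. Therefore it suffices to compute $\dim \ho_G(\varphi,\varphi)$. By part (2) of Schur's Lemma, any $T\in \ho_G(\varphi,\varphi)$ has the form $T=c\, 1_W$ for some $c\in \mathbb{C}$, where $W$ is the representation space of $\varphi$; conversely every such scalar multiple of the identity is clearly a morphism. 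Hence $\ho_G(\varphi,\varphi)=\mathbb{C}\cdot 1_W$ has dimension $1$, and transporting this through the isomorphism above gives $\dim \ho_G(\psi,\varphi)=1$.

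There is no genuine obstacle here; everything is immediate from the two cited results once the isomorphism of Proposition \ref{lem31} is invoked in the right form. The only mild point to be careful about is that Proposition \ref{lem31} is formulated as an isomorphism in the first argument, so one must apply it with the equivalence $\varphi\sim\psi$ in that slot (which is symmetric, hence permitted) in order to reduce $\ho_G(\psi,\varphi)$ to $\ho_G(\varphi,\varphi)$ rather than to $\ho_G(\psi,\psi)$; either reduction works and gives a one-dimensional space.
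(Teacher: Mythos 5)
Your proof is correct and follows essentially the same route as the paper: Schur's Lemma part (1) for the inequivalent case, and Proposition \ref{lem31} with $\rho=\varphi$ to reduce to $\ho_G(\varphi,\varphi)$, whose dimension is $1$ by Schur's Lemma part (2). Your extra remark about which slot of Proposition \ref{lem31} the equivalence must occupy is a sensible precision, but the argument is otherwise identical to the paper's.
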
 

\begin{proof}
If $\psi \nsim \varphi$, it follows by Schur's lemma that 
$\ho_G(\psi ,\varphi )=\{0\}$. 
If $\psi \sim \phi$, by Lemma \ref{lem31}, we get 
$\ho_G(\psi ,\varphi ) \cong \ho_G (\varphi ,\varphi)$. 
By Schur's lemma again, we have 
$ \dim \ho_G (\psi ,\varphi) =\dim 
\ho_G (\varphi ,\varphi) =1$. 
 \end{proof}

\begin{lemma} \label{lem3}
Let $A$ be a linear map on vector space $V$. 
Then $\mathrm{rank} (A^2)=\mathrm{rank}(A)$ if and only if  
$ V =\mathrm{Ker }(A) \oplus \mathrm{Im} (A)$. 
In particular,  if $A^2=A$, then $\mathrm{rank}(A)=\mathrm{Tr}(A)$. 
\end{lemma}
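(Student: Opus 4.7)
The plan is to reduce both directions of the equivalence to analyzing the restriction $A|_{\mathrm{Im}(A)}$, and then to derive the trace identity as a short corollary once we know $A$ is a projection. The starting observation is that $\mathrm{Im}(A^2) = A(\mathrm{Im}(A))$, so $\rank(A^2)$ equals the rank of the restricted map $A|_{\mathrm{Im}(A)} \colon \mathrm{Im}(A) \to V$. Since this restriction has domain of dimension $\rank(A)$, its rank equals $\rank(A)$ if and only if it is injective, which in turn is equivalent to $\mathrm{Ker}(A) \cap \mathrm{Im}(A) = \{0\}$.

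With this reformulation in hand, I would prove the forward direction by assuming $\rank(A^2) = \rank(A)$. By the observation above, $\mathrm{Ker}(A) \cap \mathrm{Im}(A) = \{0\}$, so the sum $\mathrm{Ker}(A) + \mathrm{Im}(A)$ is direct. Combined with the rank–nullity theorem $\dim \mathrm{Ker}(A) + \dim \mathrm{Im}(A) = \dim V$, this forces $V = \mathrm{Ker}(A) \oplus \mathrm{Im}(A)$. Conversely, if $V$ decomposes as this direct sum, then the intersection $\mathrm{Ker}(A) \cap \mathrm{Im}(A)$ is trivial, so $A|_{\mathrm{Im}(A)}$ is injective, whence $\rank(A^2) = \dim \mathrm{Im}(A) = \rank(A)$.

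For the second statement, I would specialize to $A^2 = A$. Obviously $\rank(A^2) = \rank(A)$, so by the first part $V = \mathrm{Ker}(A) \oplus \mathrm{Im}(A)$. Moreover, for any $v = Aw \in \mathrm{Im}(A)$ we have $Av = A^2 w = Aw = v$, so $A$ acts as the identity on $\mathrm{Im}(A)$ and as zero on $\mathrm{Ker}(A)$. Choosing a basis of $V$ adapted to this decomposition, the matrix of $A$ becomes block-diagonal with an identity block of size $\rank(A)$ and a zero block, yielding $\tr(A) = \rank(A)$.

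The only delicate point, really, is the equivalence $\mathrm{Ker}(A) \cap \mathrm{Im}(A) = \{0\} \Longleftrightarrow V = \mathrm{Ker}(A) \oplus \mathrm{Im}(A)$, but since $V$ is finite-dimensional this follows immediately from rank–nullity; there is no serious obstacle. The argument above is essentially linear-algebra bookkeeping, and the trace identity is a clean by-product of the direct sum decomposition once one notices that $A$ restricts to the identity on its image.
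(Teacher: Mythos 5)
Your proof is correct, and it organizes the equivalence somewhat differently from the paper. The paper argues with an explicit basis $A\varepsilon_1,\dots,A\varepsilon_r$ of $\mathrm{Im}(A)$: for the forward direction it decomposes an arbitrary $v\in V$ as $\beta+\gamma$ with $\beta\in\mathrm{Ker}(A)$ and $\gamma\in\mathrm{Im}(A)$ (so it first obtains $V=\mathrm{Ker}(A)+\mathrm{Im}(A)$ and only then gets directness from the dimension formula), and for the converse it runs a linear-independence check on $A^2\varepsilon_1,\dots,A^2\varepsilon_r$ using $\mathrm{Ker}(A)\cap\mathrm{Im}(A)=\{0\}$. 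You instead compress both directions into the single observation that $\mathrm{Im}(A^2)=A(\mathrm{Im}(A))$, so $\rank(A^2)=\rank(A)$ holds precisely when the restriction $A|_{\mathrm{Im}(A)}$ is injective, i.e.\ when $\mathrm{Ker}(A)\cap\mathrm{Im}(A)=\{0\}$, and then rank--nullity upgrades the trivial intersection to the full direct sum (and conversely). This is a cleaner, basis-free reformulation that makes the two implications symmetric, at the cost of one extra application of rank--nullity to the restricted map; the paper's version is more elementary vector-by-vector bookkeeping. For the final assertion that $A^2=A$ implies $\rank(A)=\tr(A)$, your argument and the paper's are essentially identical: both note that $A$ acts as the identity on $\mathrm{Im}(A)$ and as zero on $\mathrm{Ker}(A)$, choose a basis adapted to $V=\mathrm{Ker}(A)\oplus\mathrm{Im}(A)$, and read off the trace of the resulting block-diagonal matrix.
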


\begin{proof}
Denote $\dim ( \mathrm{Im}(A) )=r$ and let 
$A\varepsilon_1,A\varepsilon_2,\ldots ,A\varepsilon_r$ 
be a basis of $\mathrm{Im}(A)$.  
Then $\mathrm{Im}(A^2)=\{ Av:v\in \mathrm{Im}(A)\}=\mathrm{Span}
\{ A^2\varepsilon_1,A^2\varepsilon_2,\ldots ,A^2\varepsilon_r \}$. 

We first show the necessity. Since $\dim (\mathrm{Im}(A^2))=\mathrm{rank}(A^2) 
=\mathrm{rank}(A)=r$, 
then $\mathrm{Im}(A^2)\subseteq \mathrm{Im}(A)$ implies 
$\mathrm{Im}(A^2)=\mathrm{Im}(A)$ and $A^2\varepsilon_1,A^2\varepsilon_2,\ldots ,A^2\varepsilon_r$ 
are linearly independent.   
For every $v\in V$, then $Av\in \mathrm{Im} (A)=\mathrm{Im}(A^2)$, there 
are $k_1,k_2,\ldots ,k_r$ such that 
$Av =\sum_{i=1}^r k_i A^2\varepsilon_i$, 
which implies $A (v-\sum_{i=1}^r k_iA\varepsilon_i)=0$. 
Let $\beta :=v-\sum_{i=1}^r k_iA\varepsilon_i $ 
and $\gamma :=\sum_{i=1}^r k_iA\varepsilon_i $. 
Therefore $v=\beta +\gamma$ and $\beta \in \mathrm{Ker}(A),\gamma 
\in \mathrm{Im}(A)$.  
So $V=\mathrm{Ker}(A) +\mathrm{Im}(A)$. 
Combining the dimensional 
formula $\dim (\mathrm{Ker}(A)) +\dim (\mathrm{Im}(A))=\dim V$, 
it yields that $V=\mathrm{Ker}(A) \oplus \mathrm{Im}(A)$. 

We second show the sufficiency. 
If linear map $A$ 
satisfy $V=\mathrm{Ker}(A) \oplus \mathrm{Im}(A)$,  
we next prove  $\mathrm{rank} (A^2)=\mathrm{rank}(A)$, 
it is sufficient to show that 
$A^2\varepsilon_1,A^2\varepsilon_2,\ldots ,A^2\varepsilon_r$ 
are linearly independent.  If $\sum_{i=1}^r c_i A^2 \varepsilon_i=0$, 
then $\sum_{i=1}^r c_iA\varepsilon_i \in \mathrm{Ker}(A)$.  On the other hand, 
$\sum_{i=1}^r c_iA\varepsilon_i \in \mathrm{Im}(A)$.  
Since $V=\mathrm{Ker}(A) \oplus \mathrm{Im}(A)$, then  
$\mathrm{Ker}(A)\cap \mathrm{Im}(A)=\{0\}$. 
Therefore, $\sum_{i=1}^r c_iA\varepsilon_i =0$. 
As $A\varepsilon_1,A\varepsilon_2,\ldots ,A\varepsilon_r$ 
is a basis of $\mathrm{Im}(A)$, which implies $c_1=c_2=\cdots =c_r=0$.  
So $A^2\varepsilon_1,A^2\varepsilon_2,\ldots ,A^2\varepsilon_r$ 
are linearly independent.   

If $A$ is a linear map such that 
$A^2=A$, we now show $\rank (A)=\tr (A)$. 
Without loss of generality,  
we may assume that $A\in M_{n\times n}(\mathbb C)$ with $\rank (A)=r$.  
By the previous conclusion, we get $\mathbb{C}^n =\mathrm{Ker}(A) 
\oplus \mathrm{Im}(A)$.  Let $\alpha_1,\alpha_2,\ldots ,\alpha_r$ be 
a basis of $\mathrm{Im}(A)$ and $\alpha_{r+1},\ldots ,\alpha_n$ 
be a basis of $\mathrm{Ker}(A)$.  Since $A^2=A$, 
it is not difficult to see that  $A(\alpha_i)=\alpha_i$ for every $i=1,2,\ldots ,r$.  
Then 
\[ A(\alpha_1,\alpha_2,\ldots ,\alpha_n)=(\alpha_1,\alpha_2,\ldots ,\alpha_n)B,\]
where $B=\mathrm{diag}(1,1,\ldots,1,0,\ldots ,0)$.  Therefore, we get  $\rank (A)=\rank (B)=
\tr (B)=\tr (A)$.  This completes the proof. 
\end{proof}

\begin{lemma}  \label{lem4}
Let $\rho : G \to GL(V)$ be a representation of a finite group $G$.  
Define the fixed subspace
$V^G = \{v \in V \,| \, \rho_g v=v, \forall g \in G\}$. 
Then 
\[ \dim V^G=\frac{1}{|G|} \sum\limits_{g\in G} \tr (\rho_g). \]
\end{lemma}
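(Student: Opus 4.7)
The plan is to realize $V^G$ as the image of an idempotent operator and then apply Lemma \ref{lem3}, which converts the rank of an idempotent into its trace.

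First I would introduce the averaging operator
\[ P := \frac{1}{|G|}\sum_{g \in G} \rho_g \in \ho(V,V). \]
The key computation is that $P$ is an idempotent, i.e., $P^2 = P$. To check this, I would use left-translation invariance of the sum: for any fixed $h \in G$,
\[ \rho_h P = \frac{1}{|G|}\sum_{g \in G} \rho_{hg} = \frac{1}{|G|}\sum_{g' \in G} \rho_{g'} = P, \]
since $g \mapsto hg$ is a bijection on $G$. Averaging this identity over $h \in G$ on the left yields $P \cdot P = P$.

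Next I would identify $\mathrm{Im}(P) = V^G$. The inclusion $\mathrm{Im}(P) \subseteq V^G$ follows from $\rho_h P = P$, which says that every vector of the form $Pv$ is fixed by every $\rho_h$. Conversely, if $v \in V^G$, then $\rho_g v = v$ for all $g$, so $Pv = \frac{1}{|G|}\sum_g \rho_g v = v$, giving $v \in \mathrm{Im}(P)$ and moreover $P|_{V^G} = 1_{V^G}$.

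Combining these, Lemma \ref{lem3} applied to the idempotent $P$ gives $\rank(P) = \tr(P)$. Since $\mathrm{Im}(P) = V^G$, we have $\dim V^G = \rank(P)$, while by linearity of trace
\[ \tr(P) = \frac{1}{|G|}\sum_{g\in G} \tr(\rho_g), \]
and the claimed formula follows. There is no serious obstacle here; the only conceptual step is recognizing the averaging operator as the correct object, and the only technical step is verifying $P^2 = P$, which is a one-line reindexing argument.
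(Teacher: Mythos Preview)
Your proof is correct and follows essentially the same route as the paper: introduce the averaging operator $P$, show $P^2=P$ via reindexing, identify $\mathrm{Im}(P)=V^G$, and invoke Lemma~\ref{lem3}. The only cosmetic difference is that you first establish $\rho_h P = P$ and deduce both $P^2=P$ and $\mathrm{Im}(P)\subseteq V^G$ from it, whereas the paper verifies these two facts by separate (but equivalent) computations.
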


\begin{proof}
It is easy to see that $V^G$ is a $G$-invariant subspace. 
We now denote $P=\frac{1}{|G|} \sum_{g\in G} \rho_g$, 
which is a linear map on $V$. 
 Firstly, $P^2=P$, since 
 \begin{align*}
 P^2&=\left( \frac{1}{|G|} \sum\limits_{h \in G} \rho_h \right)
 \left( \frac{1}{|G|}\sum\limits_{ g\in G} \rho_g \right) \\
 &=\frac{1}{|G|} \sum\limits_{ h\in G} 
 \left(  \frac{1}{|G|} \sum\limits_{g\in G} \rho_h \rho_g \right) \\
 &=\frac{1}{|G|} \sum\limits_{ h\in G} 
 \left(  \frac{1}{|G|} \sum\limits_{g\in G} \rho_{hg} \right) .
 \end{align*}
 We now apply a change of variable by setting $x=hg$. 
 For fixed $h$, as $g$ ranges over all elements of $G$, 
 then $x$ ranges over all elements of $G$. Thus, 
 \[ P^2 =\frac{1}{|G|} \sum\limits_{ h\in G}  
 \left(  \frac{1}{|G|} \sum\limits_{x\in G} \rho_x \right)= 
 \frac{1}{|G| } \sum\limits_{h\in G} P=P. \]
 We next show that $\mathrm{Im}(P)=V^G$. 
 For every $g\in G$ and $v\in V$, observe that 
 \[ \rho_g (Pv)=\rho_g \left(  \frac{1}{|G|} \sum\limits_{h\in G} \rho_h  (v)\right) 
 =\frac{1}{|G|} \sum\limits_{h\in G} \rho_g\rho_h (v) = 
 \frac{1}{|G|} \sum\limits_{x \in G} \rho_x (v)=Pv. 
  \]
  In other words, $Pv\in V^G$ for every $v\in V$, i.e., 
  $\mathrm{Im}(P)=\{Pv : v\in V\} \subseteq V^G $.  
  For each $v\in V^G$, we have $Pv
  =\frac{1}{|G|} \sum_{ g\in G} \rho_g (v) 
  =\frac{1}{|G|} \sum_{g\in G} v=v$. Then 
  $\mathrm{Im}(P) \supseteq \{Pv : v\in V^G\}=\{v:v\in V^G\}=V^G$.  
  Therefore, we get $P^2=P$ and $\mathrm{Im}(P)=V^G$. 
  By Lemma \ref{lem3},  it follows that 
  \[  \dim V^G=\dim (\mathrm{Im}(P)) =\mathrm{rank}(P)=\tr (P)=
  \frac{1}{|G|} \sum\limits_{g\in G} \tr (\rho_g). \]
  Hence, we complete the proof. 
\end{proof}

\begin{corollary} \label{cor28}
Let $\rho : G\to GL(V)$ be a representation and 
$\chi_1$ be the trivial character of $G$. 
Then $\langle \chi_{\rho}, \chi_1 \rangle =\dim V^G$.
\end{corollary}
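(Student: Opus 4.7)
The plan is to unwind the definitions of the inner product on $L(G)$ and of the trivial character, and then invoke Lemma \ref{lem4} directly.

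First I would recall that the trivial character $\chi_1$ is the character of the trivial representation, so $\chi_1(g)=1$ for every $g\in G$, and in particular $\overline{\chi_1(g)}=1$. Plugging this into the definition of the inner product on $L(G)$ gives
\[ \langle \chi_{\rho},\chi_1\rangle = \frac{1}{|G|}\sum_{g\in G}\chi_{\rho}(g)\overline{\chi_1(g)} = \frac{1}{|G|}\sum_{g\in G}\chi_{\rho}(g). \]
Next I would rewrite $\chi_{\rho}(g)=\tr(\rho_g)$ using the definition of the character of $\rho$, so that the right-hand side becomes $\frac{1}{|G|}\sum_{g\in G}\tr(\rho_g)$.

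Finally I would apply Lemma \ref{lem4}, which asserts exactly that $\dim V^G=\frac{1}{|G|}\sum_{g\in G}\tr(\rho_g)$, to conclude $\langle \chi_{\rho},\chi_1\rangle=\dim V^G$. There is essentially no obstacle here, since the corollary is a one-line reformulation of Lemma \ref{lem4}; the only bookkeeping is remembering that the complex conjugate of $1$ is $1$ so that the inner product collapses into a plain sum of traces.
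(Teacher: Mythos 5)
Your proof is correct and coincides with the paper's own argument: both expand the inner product, use $\overline{\chi_1(g)}=1$ to reduce it to $\frac{1}{|G|}\sum_{g\in G}\tr(\rho_g)$, and then cite Lemma \ref{lem4}. Nothing is missing.
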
 

\begin{proof}
By the definition of inner product and Lemma \ref{lem4}, 
we have 
\[  \langle \chi_{\rho}, \chi_1 \rangle =
\frac{1}{|G|} \sum_{g\in G} \chi_{\rho}(g) \overline{\chi_1(g)} 
=\frac{1}{|G|} \sum_{g\in G} \tr (\rho_g)=\dim V^G. \]
The desired result holds. 
\end{proof}

\noindent 
{\bf Remark.}~~Corollary \ref{cor28} is a key proposition 
in \cite[p. 86]{BS12},  we here present a versatile and succinct proof. 
Of course, the proof provided by 
Steinberg is quite different and technical. 

\begin{lemma} \label{lem5}
Let $\psi: G \to U_n (\mathbb C)$ and  $\varphi : G \to U_m(\mathbb C) $ be 
unitary matrix representations of a finite group $G$.
Let $V=M_{m\times n} (\mathbb C)$, and define 
a representation $\rho : G \to GL(V)$ by $\rho_g(A)=\varphi_g A \psi^{-1}_{g}$ for every $A\in V$.
Then $V^G=\ho_G ( \psi, \varphi) $ and 
$ \chi_\rho (g)=\chi_\varphi (g) \overline {\chi_\psi(g)}$. 
\end{lemma}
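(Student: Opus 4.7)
The plan is to dispatch the two assertions separately after a brief sanity check that $\rho$ really is a representation: the computation $\rho_{gh}(A)=\varphi_{gh}A\psi_{gh}^{-1}=\varphi_g(\varphi_h A\psi_h^{-1})\psi_g^{-1}=\rho_g(\rho_h(A))$ together with $\rho_e=\mathrm{id}_V$ gives this in one line.

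For the first claim $V^G=\ho_G(\psi,\varphi)$, I would simply unwind the definitions. A matrix $A\in V$ lies in $V^G$ iff $\rho_g(A)=A$ for every $g\in G$, i.e.\ $\varphi_g A\psi_g^{-1}=A$, which after multiplying by $\psi_g$ on the right is exactly the morphism condition $\varphi_g A=A\psi_g$. So the two sets coincide as subsets of $V$.

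For the character identity, the cleanest route is to work with the standard basis $\{E_{ij}:1\le i\le m,\,1\le j\le n\}$ of $V=M_{m\times n}(\mathbb C)$, where $E_{ij}$ has a $1$ in the $(i,j)$ position and zeros elsewhere. A direct expansion gives
\[
\rho_g(E_{ij})=\varphi_g E_{ij}\psi_g^{-1}=\sum_{k,l}(\varphi_g)_{ki}(\psi_g^{-1})_{jl}\,E_{kl},
\]
so the diagonal coefficient of $\rho_g$ at the basis vector $E_{ij}$ is $(\varphi_g)_{ii}(\psi_g^{-1})_{jj}$. Summing over $(i,j)$ factors as
\[
\chi_\rho(g)=\tr(\rho_g)=\Bigl(\sum_{i=1}^m(\varphi_g)_{ii}\Bigr)\Bigl(\sum_{j=1}^n(\psi_g^{-1})_{jj}\Bigr)=\chi_\varphi(g)\,\chi_\psi(g^{-1}).
\]
Finally I would invoke unitarity of $\psi$: since $\psi_g^{-1}=\psi_g^*$, we have $\chi_\psi(g^{-1})=\tr(\psi_g^*)=\overline{\tr(\psi_g)}=\overline{\chi_\psi(g)}$, which delivers the stated formula.

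The proof really has no serious obstacle — everything follows from bookkeeping in a chosen basis. The only step that deserves a moment of care is writing down $\rho_g(E_{ij})$ in coordinates and recognizing that the double sum factors as a product of two traces; once that factorization is visible, the unitary hypothesis on $\psi$ immediately converts $\chi_\psi(g^{-1})$ into $\overline{\chi_\psi(g)}$.
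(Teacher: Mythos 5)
Your proposal is correct and follows essentially the same route as the paper: both unwind the definition of $V^G$ to get the morphism condition $\varphi_g A = A\psi_g$, and both compute $\chi_\rho(g)$ by expanding $\rho_g(E_{ij})$ in the standard basis of $M_{m\times n}(\mathbb C)$, extracting the diagonal coefficients $(\varphi_g)_{ii}(\psi_g^{-1})_{jj}$ and using unitarity of $\psi$ to rewrite $\chi_\psi(g^{-1})$ as $\overline{\chi_\psi(g)}$. The brief verification that $\rho$ is a representation is a harmless addition the paper leaves implicit.
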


\begin{proof}
First, we can see from the definition of $V^G$ in Lemma \ref{lem4} that 
\begin{align*}
V^G &=\{A\in V \,|\, \rho_g(A)=A,\forall g\in G\} \\
&=\{ A\in M_{m\times n}(\mathbb C) \,|\, \varphi_g A =A \psi_g,\forall g \in G\} \\
&=\ho_G (\psi ,\varphi). 
\end{align*}
Let $ (E_{11},E_{12},\dots,E_{m n})$ be a basis of $ M_{m\times n} (\mathbb C)$. 
Then
 $$\rho_g(E_{ij})=\varphi_g E_{ij} \psi_{g}^{-1} =(\varphi_g e_i)( e_j^\top \psi_{g}^{-1})=
 \sum_{k=1}^{m}\sum_{\ell=1}^{n}\varphi_{ki}(g)(\psi_{g}^{-1})_{j\ell}E_{k\ell}.$$
 Therefore, we can see that the diagonal entries of corresponding matrix of 
$\rho_g$ are $\varphi_{ii}(g)(\psi_g^{-1})_{jj}$ with $i=1,2,\ldots ,m$ and $j=1,2,\ldots ,n$. We get 
\begin{align*}
\chi_\rho(g)
&=\tr(\rho_g)= \sum_{i=1}^{m}\sum_{j=1}^{n} \varphi_{ii}(g)(\psi_g^{-1})_{jj}  \\
&= \sum_{i=1}^{m}\sum_{j=1}^{n} \varphi_{ii}(g)\overline{\psi_{jj}(g)} =  \chi_\varphi (g) \overline {\chi_\psi(g)},
\end{align*}
the third equality holds due to $\psi$ is unitary. 
\end{proof}

\begin{proposition} \label{lem6}
Let $\psi$ and $\varphi $ be representations of a finite group $G$. 
Then $\ho_G(\psi, \varphi) $  is isomorphic to $ \ho_G(\varphi, \psi)$.
\end{proposition}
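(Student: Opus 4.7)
The plan is to compute $\dim\ho_G(\psi,\varphi)$ and $\dim\ho_G(\varphi,\psi)$ separately using the machinery already developed, observe that the two numbers coincide, and conclude by the fact that finite-dimensional complex vector spaces of equal dimension are isomorphic.

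First I would reduce to the unitary case. By Lemma \ref{lem21} we may replace $\psi$ and $\varphi$ by equivalent unitary matrix representations $\psi : G \to U_n(\mathbb C)$ and $\varphi : G \to U_m(\mathbb C)$; Proposition \ref{lem31} (together with its obvious analogue in the second argument) guarantees that both $\ho_G(\psi,\varphi)$ and $\ho_G(\varphi,\psi)$ are preserved up to isomorphism under this replacement.

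Next I would invoke Lemma \ref{lem5} in both directions. On one side, $\ho_G(\psi,\varphi) = V^G$ where $V = M_{m \times n}(\mathbb C)$ carries the action $\rho_g(A) = \varphi_g A \psi_g^{-1}$ with character $\chi_\rho(g) = \chi_\varphi(g)\overline{\chi_\psi(g)}$. Symmetrically, $\ho_G(\varphi,\psi) = (V')^G$ for $V' = M_{n \times m}(\mathbb C)$ with $\rho'_g(B) = \psi_g B \varphi_g^{-1}$ and character $\chi_{\rho'}(g) = \chi_\psi(g)\overline{\chi_\varphi(g)}$. Lemma \ref{lem4} then gives
\begin{align*}
\dim \ho_G(\psi,\varphi) &= \frac{1}{|G|} \sum_{g \in G} \chi_\varphi(g)\overline{\chi_\psi(g)}, \\
\dim \ho_G(\varphi,\psi) &= \frac{1}{|G|} \sum_{g \in G} \chi_\psi(g)\overline{\chi_\varphi(g)}.
\end{align*}
The two scalars are complex conjugates of one another, yet each is a non-negative integer (the dimension of a complex vector space). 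Since a non-negative integer equals its own conjugate, the two dimensions coincide, and the desired isomorphism follows.

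There is no serious obstacle; the one point to monitor is that Lemma \ref{lem5} genuinely requires unitarity (the identity $\overline{\psi_{jj}(g)} = (\psi_g^{-1})_{jj}$ is invoked in its proof), so the reduction step cannot be skipped. A more constructive variant in the unitary setting is the map $T \mapsto T^*$, which is a conjugate-linear bijection $\ho_G(\psi,\varphi) \to \ho_G(\varphi,\psi)$: from $T\psi_g = \varphi_g T$ one obtains $\psi_g^{-1} T^* = T^* \varphi_g^{-1}$, and running $g$ over $G$ shows $T^* \in \ho_G(\varphi,\psi)$. This at once yields equality of real—and hence complex—dimensions and serves as an alternative route to the same conclusion.
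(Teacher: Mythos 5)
Your proposal is correct, but your main route differs from the paper's. The paper proves the proposition directly by the map $\pi(T)=T^{*}$: from $T\psi_g=\varphi_g T$ one takes conjugate transposes and uses unitarity to get $T^{*}\varphi_g=\psi_g T^{*}$, so $\pi$ is a bijection $\ho_G(\psi,\varphi)\to\ho_G(\varphi,\psi)$ --- exactly the ``constructive variant'' you mention at the end (and you are in fact more careful than the paper here: $T\mapsto T^{*}$ is conjugate-linear, not linear as the paper asserts, though it still forces equality of dimensions; you also make explicit the reduction to unitary representations via Lemma \ref{lem21} and Proposition \ref{lem31}, which the paper's statement needs but its proof leaves implicit). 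Your primary argument instead counts dimensions: by Lemma \ref{lem5} and Lemma \ref{lem4}, $\dim\ho_G(\psi,\varphi)=\frac{1}{|G|}\sum_{g}\chi_\varphi(g)\overline{\chi_\psi(g)}$ and $\dim\ho_G(\varphi,\psi)$ is its complex conjugate, and since both are nonnegative integers they coincide. This is legitimate and not circular (you use only Lemmas \ref{lem4} and \ref{lem5}, not Theorem \ref{thm1}, whose proof in the paper relies on the present proposition), and it buys a slick numerical argument at the cost of being non-constructive and requiring the equivalence-invariance of both arguments of $\ho_G$; the paper's map $T\mapsto T^{*}$ is more elementary and exhibits the correspondence explicitly, which is also what the later propositions (e.g.\ the remark after Proposition \ref{prop31}) echo.
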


\begin{proof}
Let $\pi:\ho_G(\psi, \varphi) \longrightarrow \ho_G(\varphi, \psi)$ be a map 
defined by $\pi (T)=\overline{T}^{\top}:=T^*$. 
Since $T \in \ho_G(\psi,\varphi)$, 
that is, $T\psi_g=\varphi_gT$, by taking the conjugate transpose, 
we have $\psi_{g}^*T^*=T^*\varphi_g^*$. 
As $\psi, \varphi$ are unitary representations,  then $\psi_g^*=
\psi_g^{-1}$ and  $\varphi_g^*=\varphi_g^{-1}$, we then get  
$\psi_g^{-1}T^*=T^*\varphi_{g}^{-1},$ 
By left-multiplying $\psi_g$ and right-multiplying $\varphi_g$,  
it follows that $T^*\varphi_g=\psi_gT^*$, 
so we get $T^* \in \ho_G(\varphi, \psi)$. 
It is easy to see that $\pi$ is  an injective and surjective linear map.  
This completes the proof.
\end{proof}

\begin{proposition} \label{lem7}
Let  $\psi$ and $\varphi $ be representations of a finite group $G$.  
Then $\langle \chi_{\varphi}, \chi_{\psi} \rangle$ is a nonnegative integer 
and $\langle \chi_{\psi},\chi_{\varphi} \rangle 
=\langle \chi_{\varphi}, \chi_{\psi} \rangle$.   
\end{proposition}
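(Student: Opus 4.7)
The plan is to express $\langle \chi_\varphi, \chi_\psi \rangle$ as the dimension of the morphism space $\ho_G(\psi,\varphi)$, which makes nonnegativity and integrality immediate, and then to use the isomorphism from Proposition \ref{lem6} to derive the symmetry.

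First I would invoke Lemma \ref{lem21} to replace $\psi$ and $\varphi$ by equivalent unitary matrix representations, say $\psi : G \to U_n(\mathbb{C})$ and $\varphi : G \to U_m(\mathbb{C})$; since equivalent representations share the same character and hence the same inner product, this reduction is harmless. Next I would apply Lemma \ref{lem5} to the space $V = M_{m\times n}(\mathbb{C})$ with the representation $\rho_g(A) = \varphi_g A \psi_g^{-1}$, which yields both $V^G = \ho_G(\psi,\varphi)$ and the key pointwise identity $\chi_\rho(g) = \chi_\varphi(g)\overline{\chi_\psi(g)}$.

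Once these are in hand, Corollary \ref{cor28} gives
\[
\dim \ho_G(\psi,\varphi) = \dim V^G = \langle \chi_\rho, \chi_1 \rangle = \frac{1}{|G|}\sum_{g\in G} \chi_\rho(g) = \frac{1}{|G|}\sum_{g\in G} \chi_\varphi(g)\overline{\chi_\psi(g)} = \langle \chi_\varphi, \chi_\psi \rangle.
\]
Since the left-hand side is the dimension of a vector space over $\mathbb{C}$, it is automatically a nonnegative integer, which settles the first assertion.

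For the symmetry claim, I would apply the same identity with the roles of $\psi$ and $\varphi$ swapped to obtain $\langle \chi_\psi, \chi_\varphi \rangle = \dim \ho_G(\varphi, \psi)$, and then invoke Proposition \ref{lem6}, which gives the linear isomorphism $T \mapsto T^*$ between $\ho_G(\psi,\varphi)$ and $\ho_G(\varphi,\psi)$; equality of dimensions then yields $\langle \chi_\varphi, \chi_\psi \rangle = \langle \chi_\psi, \chi_\varphi \rangle$. There is no real obstacle here: the whole argument is a bookkeeping assembly of the earlier lemmas, and the only subtlety is remembering to pass to unitary representatives at the start so that Lemma \ref{lem5} and Proposition \ref{lem6} may be legitimately applied.
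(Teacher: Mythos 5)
Your proof is correct, but it takes a genuinely different route from the paper. The paper proves Proposition \ref{lem7} by decomposing $\varphi$ and $\psi$ into irreducibles via Maschke's theorem, writing $\chi_{\varphi}=\sum_i m_i\chi_i$ and $\chi_{\psi}=\sum_i n_i\chi_i$, and then computing $\langle \chi_{\varphi},\chi_{\psi}\rangle=\sum_i m_i n_i$ using the orthonormality $\langle\chi_i,\chi_j\rangle=\delta_{ij}$ of irreducible characters; symmetry then follows because this quantity is real. You instead identify $\langle\chi_{\varphi},\chi_{\psi}\rangle$ with $\dim \ho_G(\psi,\varphi)$ directly, by unitarizing via Lemma \ref{lem21}, applying Lemma \ref{lem5} to get $V^G=\ho_G(\psi,\varphi)$ and $\chi_{\rho}=\chi_{\varphi}\overline{\chi_{\psi}}$, and averaging via Lemma \ref{lem4} (or Corollary \ref{cor28}); symmetry comes from the isomorphism $T\mapsto T^*$ of Proposition \ref{lem6}. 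In effect you prove the dimension formula of Theorem \ref{thm1} en route. What each approach buys: the paper's argument yields the explicit multiplicity formula $\sum_i m_i n_i$, but it presupposes the first orthogonality relation, which in this paper is only derived later from Theorem \ref{thm1} (whose proof in turn cites Proposition \ref{lem7}); your argument uses only lemmas already established at this point, so it sidesteps that ordering issue entirely. Two small remarks: you correctly flag that Proposition \ref{lem6}, as proved in the paper, requires unitary representations, so the initial reduction via Lemma \ref{lem21} is indeed needed there; and once you know $\langle\chi_{\varphi},\chi_{\psi}\rangle$ is a nonnegative integer, hence real, the symmetry $\langle\chi_{\psi},\chi_{\varphi}\rangle=\overline{\langle\chi_{\varphi},\chi_{\psi}\rangle}=\langle\chi_{\varphi},\chi_{\psi}\rangle$ already follows from conjugate-symmetry of the inner product, so the appeal to Proposition \ref{lem6} could even be omitted.
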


\begin{proof}
Let $\varphi^{(1)},\varphi^{(2)}, \ldots, \varphi^{(s)} $ be a complete set of 
representatives of the equivalence classes of 
irreducible representations of $G$. 
By Maschke's Theorem,  
we can write $\varphi \sim m_1\varphi^{(1)} \oplus m_2\varphi^{(2)} 
\oplus \cdots \oplus m_s\varphi^{(s)}$ and 
$\psi \sim n_1\psi^{(1)} \oplus n_2\psi^{(2)} 
\oplus \cdots \oplus n_s\psi^{(s)}$ 
for some $m_i, n_i \in \mathbb{N}$.  Then 
$\chi_{\varphi} =\sum_{i=1}^s m_i\chi_i$ and $\chi_{\psi}=
\sum_{i=1}^s n_i\chi_i$. Therefore,  
$$\langle \chi_{\varphi}, \chi_{\psi} \rangle= 
\left\langle  \sum_{i=1}^s m_i\chi_i, 
\sum_{j=1}^s n_j\chi_j \right\rangle = 
\sum\limits_{i=1}^s \sum\limits_{j=1}^s m_i\overline{n_j} 
\langle  \chi_i, \chi_j \rangle =\sum\limits_{i=1}^s m_in_i.$$
Hence, $\langle \chi_{\varphi}, \chi_{\psi} \rangle$ is a nonnegative integer  
and $ \langle \chi_{\varphi}, \chi_{\psi} \rangle = 
\overline{\langle \chi_{\varphi}, \chi_{\psi} \rangle} =
\langle \chi_{\psi},\chi_{\varphi} \rangle $. 
\end{proof}

\begin{theorem} \label{thm1}
Let $\psi $ and $\varphi$ be representations of finite group $G$. Then 
\[  \dim \ho_G (\psi ,\varphi )=\langle \chi_{\psi}, \chi_{\varphi} \rangle. \]
\end{theorem}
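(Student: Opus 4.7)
The plan is to assemble the machinery already built in the paper: Lemma \ref{lem21} lets us reduce to the unitary setting, Lemma \ref{lem5} identifies $\ho_G(\psi,\varphi)$ with the fixed subspace of a natural representation on a matrix space, Lemma \ref{lem4} computes the dimension of that fixed subspace in terms of the character, and Proposition \ref{lem7} handles the symmetry of the resulting inner product.

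First I would use Lemma \ref{lem21} to replace $\psi$ and $\varphi$ by equivalent unitary matrix representations $\psi: G\to U_n(\mathbb{C})$ and $\varphi: G\to U_m(\mathbb{C})$; since equivalent representations have the same character and (by an argument analogous to Proposition \ref{lem31}) give isomorphic morphism spaces, this reduction preserves both sides of the claimed identity. Next, on the space $V=M_{m\times n}(\mathbb{C})$, I form the representation $\rho_g(A)=\varphi_g A\psi_g^{-1}$ studied in Lemma \ref{lem5}. That lemma gives the two key facts
\[
V^G=\ho_G(\psi,\varphi) \qquad\text{and}\qquad \chi_\rho(g)=\chi_\varphi(g)\,\overline{\chi_\psi(g)}.
\]

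Applying Lemma \ref{lem4} to $\rho$ then yields
\[
\dim\ho_G(\psi,\varphi)=\dim V^G=\frac{1}{|G|}\sum_{g\in G}\tr(\rho_g)=\frac{1}{|G|}\sum_{g\in G}\chi_\varphi(g)\,\overline{\chi_\psi(g)}=\langle \chi_\varphi,\chi_\psi\rangle.
\]
Finally, Proposition \ref{lem7} gives $\langle\chi_\varphi,\chi_\psi\rangle=\langle\chi_\psi,\chi_\varphi\rangle$, which delivers the stated form of the identity.

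There is no real obstacle here; the result is essentially a bookkeeping exercise once Lemmas \ref{lem4} and \ref{lem5} are in hand. The only mildly subtle point to verify carefully is the reduction step: one must check that passing from $\psi$, $\varphi$ to equivalent unitary representations does not change $\dim\ho_G(\psi,\varphi)$, which follows by conjugating morphisms by the intertwiners (essentially the argument already used in Proposition \ref{lem31}, applied on both sides).
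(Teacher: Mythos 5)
Your proposal is correct and follows essentially the same route as the paper: identify $\ho_G(\psi,\varphi)$ with the fixed space $V^G$ via Lemma \ref{lem5}, compute $\dim V^G$ by Lemma \ref{lem4}, and then use Proposition \ref{lem7} to swap the arguments of the inner product. The only difference is that you make the reduction to unitary representations via Lemma \ref{lem21} explicit (a hypothesis Lemma \ref{lem5} needs and the paper leaves implicit), which is a careful touch but not a different argument.
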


\begin{proof}
By Lemma \ref{lem5} and Lemma \ref{lem4}, we have 
\begin{equation} \label{eeq}
\begin{aligned} 
\dim\ho_G(\psi,\varphi) & =\dim V^G= \frac{1}{|G|} \sum\limits_{g\in G} \tr (\rho_g) 
=\frac{1}{G}\sum_{g \in G}\chi_{\rho}(g) \\
&=\frac{1}{G}\sum_{g \in G}\chi_{\varphi}(g)\overline{\chi_\psi(g)} 
=\langle \chi_{\varphi}, \chi_{\psi} \rangle.
\end{aligned}
\end{equation}
By Lemma \ref{lem6} and Lemma \ref{lem7}, we get 
\[ \dim\ho_G(\varphi,\psi)=\dim\ho_G(\psi,\varphi) =
\langle \chi_{\varphi}, \chi_{\psi} \rangle =
\langle  \chi_{\psi}, \chi_{\varphi} \rangle . \]
This required equality holds. 
\end{proof}

\begin{lemma}[Direct sum of representations]
Let $\varphi :G\to GL (W)$ and $\psi : G\to GL (V)$ be representations of  group $G$. 
Define $\varphi \oplus \psi :G\to GL(W\oplus V)$ by 
\[ (\varphi \oplus \psi )_g (w,v)=(\varphi_g(w),\psi_g(v)). \]
Then $\varphi \oplus \psi $ is a representation  and 
$\chi_{\varphi \oplus \psi} (g)= \chi_{\varphi}(g) +\chi_{\psi}(g)$. 
\end{lemma}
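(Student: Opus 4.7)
The plan is to verify the two assertions of the lemma separately: first that $\varphi\oplus\psi$ is indeed a representation, and then that its character is the sum of the characters of $\varphi$ and $\psi$. Neither step is deep; the work is mostly bookkeeping in the direct sum $W\oplus V$.

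For the first assertion, I would check linearity, invertibility, and the homomorphism property in turn. Linearity of $(\varphi\oplus\psi)_g$ is immediate from the coordinatewise definition together with the linearity of $\varphi_g$ and $\psi_g$. For invertibility, I would exhibit $(\varphi\oplus\psi)_{g^{-1}}$ as a two-sided inverse by applying the formula twice and using that $\varphi$ and $\psi$ are themselves homomorphisms into $GL(W)$ and $GL(V)$. The homomorphism property reduces to the chain
\[
(\varphi\oplus\psi)_{gh}(w,v)=(\varphi_{gh}w,\psi_{gh}v)=(\varphi_g\varphi_h w,\psi_g\psi_h v)=(\varphi\oplus\psi)_g(\varphi\oplus\psi)_h(w,v),
\]
valid for all $(w,v)\in W\oplus V$.

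For the character identity, I would fix an ordered basis $(w_1,\dots,w_m)$ of $W$ and $(v_1,\dots,v_n)$ of $V$ and consider the concatenated basis $(w_1,\dots,w_m,v_1,\dots,v_n)$ of $W\oplus V$ (under the identifications $w_i\leftrightarrow(w_i,0)$ and $v_j\leftrightarrow(0,v_j)$). Because $(\varphi\oplus\psi)_g$ sends each $(w_i,0)$ to $(\varphi_g w_i,0)$ and each $(0,v_j)$ to $(0,\psi_g v_j)$, the matrix of $(\varphi\oplus\psi)_g$ in this basis is block diagonal with blocks equal to the matrices of $\varphi_g$ and $\psi_g$. Taking traces gives $\chi_{\varphi\oplus\psi}(g)=\chi_\varphi(g)+\chi_\psi(g)$.

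Since every step is elementary and each of the required verifications follows directly from the definitions, there is no genuine obstacle here; the only thing to be careful about is choosing clean notation for $W\oplus V$ so that the block-diagonal structure is transparent and the trace computation is unambiguous.
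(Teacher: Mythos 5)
Your proposal is correct and follows essentially the same route as the paper: a direct verification of the homomorphism identity $(\varphi\oplus\psi)_{g_1g_2}=(\varphi\oplus\psi)_{g_1}(\varphi\oplus\psi)_{g_2}$ followed by the trace computation via the block-diagonal structure of $(\varphi\oplus\psi)_g$. You merely spell out a few details the paper leaves implicit (linearity, invertibility, the choice of concatenated basis), which is fine.
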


\begin{proof}
For any $g_1,g_2\in G$, we have 
\begin{align*}
(\varphi \oplus \psi )_{g_1g_2}(w,v) 
&= \bigl( \varphi_{g_1g_2}(w),\psi_{g_1g_2}(v) \bigr) \\
&= \bigl( \varphi_{g_1}\varphi_{g_2}(w),\psi_{g_1}\psi_{g_2}(v) \bigr) \\
&=(\varphi \oplus \psi)_{g_1} \bigl( \varphi_{g_2}(w), \psi_{g_2}(v)\bigr) \\
&= (\varphi \oplus \psi)_{g_1} (\varphi \oplus \psi)_{g_2}(w,v). 
\end{align*}
Thus, $(\varphi \oplus \psi )_{g_1g_2} 
= (\varphi \oplus \psi)_{g_1} (\varphi \oplus \psi)_{g_2}$. Furthermore, 
\[ \chi_{\varphi \oplus \psi}(g)=\tr (\varphi \oplus \psi)_g=\tr (\varphi_g) + 
\tr (\psi_g) = \chi_{\varphi}(g) +\chi_{\psi}(g).  \]
The desired equality now follows.
\end{proof}

\begin{corollary}
$\ho_G(\varphi \oplus\psi,  \rho) $ is isomorphic to 
$ \ho_G(\varphi, \rho) \oplus \ho_G(\psi, \rho)$. 
\end{corollary}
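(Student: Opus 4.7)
The plan is to construct an explicit isomorphism between the two morphism spaces by exploiting the universal-style property of direct sums. Write $\rho : G \to GL(U)$ for the third representation. Every linear map $T : W \oplus V \to U$ is determined by its restrictions to the two summands, so I would first define
\[
\Phi : \ho_G(\varphi \oplus \psi, \rho) \longrightarrow \ho_G(\varphi, \rho) \oplus \ho_G(\psi, \rho),
\qquad \Phi(T) = (T_1, T_2),
\]
where $T_1(w) = T(w,0)$ and $T_2(v) = T(0,v)$.

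Next I would check that $\Phi$ is well-defined, i.e.\ that $T_1$ and $T_2$ really are $G$-morphisms. Using the definition of $\varphi \oplus \psi$, for each $g \in G$ and $w \in W$,
\[
T_1(\varphi_g w) = T(\varphi_g w, 0) = T(\varphi \oplus \psi)_g(w,0) = \rho_g T(w,0) = \rho_g T_1(w),
\]
and the analogous computation gives $T_2 \in \ho_G(\psi, \rho)$. Linearity of $\Phi$ in $T$ is immediate from the linearity of the restriction operations.

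To exhibit an inverse, given $(S_1, S_2) \in \ho_G(\varphi, \rho) \oplus \ho_G(\psi, \rho)$, I would set $\Psi(S_1, S_2) = S$ with $S(w,v) := S_1(w) + S_2(v)$. Then $S$ is linear, and
\[
S(\varphi \oplus \psi)_g(w,v) = S_1(\varphi_g w) + S_2(\psi_g v) = \rho_g S_1(w) + \rho_g S_2(v) = \rho_g S(w,v),
\]
so $S \in \ho_G(\varphi \oplus \psi, \rho)$. A direct computation shows $\Phi \circ \Psi$ and $\Psi \circ \Phi$ are identity maps, completing the proof.

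I do not anticipate a real obstacle here: the argument is entirely formal, and the only thing to be careful about is distinguishing the roles of the three representations and keeping the intertwining relations straight on each summand. The statement is essentially the standard fact that $\ho(-,U)$ turns direct sums into direct sums, together with the observation that the $G$-equivariance condition respects this decomposition coordinate-wise.
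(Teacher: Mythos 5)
Your proof is correct, but it takes a genuinely different route from the paper. The paper proves this corollary by a dimension count: it invokes Theorem \ref{thm1} to write $\dim \ho_G(\varphi\oplus\psi,\rho)=\langle \chi_{\varphi\oplus\psi},\chi_\rho\rangle=\langle\chi_\varphi,\chi_\rho\rangle+\langle\chi_\psi,\chi_\rho\rangle=\dim\ho_G(\varphi,\rho)+\dim\ho_G(\psi,\rho)$, and then concludes the two spaces are isomorphic because they are complex vector spaces of equal (finite) dimension. You instead build the explicit map $\Phi(T)=(T_1,T_2)$ with $T_1(w)=T(w,0)$, $T_2(v)=T(0,v)$, verify equivariance on each summand, and exhibit the inverse $\Psi(S_1,S_2)(w,v)=S_1(w)+S_2(v)$; your intertwining computations and the check that $\Phi\circ\Psi$ and $\Psi\circ\Phi$ are identities are all correct. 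What your approach buys: it is more elementary (no characters, no Theorem \ref{thm1}, hence no dependence on $G$ being finite, on Maschke's theorem, or on the ground field being $\mathbb{C}$), and it produces a canonical, natural isomorphism rather than a mere equality of dimensions. What the paper's approach buys: given the machinery already developed, the proof is a two-line computation, and the same character argument immediately handles the companion statement (Corollary \ref{cor216}) for direct sums in the second variable.
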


\begin{proof}
It is sufficient to show that these two vector spaces have same dimension. 
By Theorem \ref{thm1}, we have 
\begin{align*}
\dim (\ho_G(\varphi \oplus \psi,  \rho))&=\langle \chi_{\varphi \oplus \psi}, \chi_{\rho}\rangle 
=\langle \chi_{\varphi}, \chi_{\rho} \rangle
+\langle \chi_{\psi}, \chi_{\rho} \rangle \\
&=\dim \ho_G(\varphi,  \rho)+\dim \ho_G(\psi,  \rho)\\
&=\dim (\ho_G(\varphi, \rho) \oplus \ho_G(\psi, \rho)).
\end{align*}
This complete the proof. 
\end{proof}

By the same line of proof, we can get the following Corollary \ref{cor216}. 

\begin{corollary} \label{cor216}
$\ho_G(\varphi,  \psi \oplus \rho) $ is isomorphic to 
$ \ho_G(\varphi, \psi) \oplus \ho_G(\varphi, \rho).$
\end{corollary}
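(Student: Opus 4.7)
The plan is to mimic the proof of the preceding corollary almost verbatim, reducing the claim to an equality of dimensions via Theorem \ref{thm1} and the additivity of characters under direct sums. Since both sides are finite-dimensional complex vector spaces, establishing an isomorphism is equivalent to showing they have the same dimension.

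First, I would apply Theorem \ref{thm1} to the left-hand side to rewrite $\dim \ho_G(\varphi, \psi \oplus \rho)$ as the inner product $\langle \chi_\varphi, \chi_{\psi \oplus \rho}\rangle$. Then I would invoke the previous lemma on direct sums of representations, which gives $\chi_{\psi \oplus \rho} = \chi_\psi + \chi_\rho$, and substitute this into the inner product.

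Next, using the additivity of the inner product on $L(G)$ in its second argument (which follows immediately from the defining formula $\langle f_1,f_2\rangle = \frac{1}{|G|}\sum_g f_1(g)\overline{f_2(g)}$, since conjugation and summation are both additive), I would split the inner product as $\langle \chi_\varphi, \chi_\psi\rangle + \langle \chi_\varphi, \chi_\rho\rangle$. Applying Theorem \ref{thm1} again to each term gives $\dim \ho_G(\varphi,\psi) + \dim \ho_G(\varphi,\rho)$, which is precisely $\dim\bigl(\ho_G(\varphi,\psi) \oplus \ho_G(\varphi,\rho)\bigr)$.

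There is no genuine obstacle here: the entire argument is dictated by the pattern of the previous corollary, and the only substantive ingredient is that the inner product distributes over the second slot as well as the first. The mild asymmetry with the preceding corollary (where the direct sum sat in the first slot) is harmless, since the additivity property needed is the same on either side.
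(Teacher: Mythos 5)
Your proposal is correct and follows exactly the paper's intended argument: the paper proves Corollary \ref{cor216} ``by the same line of proof'' as the preceding corollary, namely equating dimensions via Theorem \ref{thm1} together with $\chi_{\psi\oplus\rho}=\chi_\psi+\chi_\rho$ and additivity of the inner product. Your observation that additivity in the second slot is what is needed here (rather than the first) is the only adjustment, and you handle it correctly.
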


In Lemma \ref{lem5}, 
we get $\chi_\rho (g)=\overline {\chi_\psi(g)} \chi_\varphi (g) $. 
We next show a more generalized result [Theorem \ref{thm216}], 
which states that $\rho$ is equivalent to 
the tensor representation $\overline{\psi}\otimes \varphi$. 
For convenience, we now introduce the conjugate representation and 
tensor representation. By Lemma \ref{lem21}, without loss of generality, 
we may only consider  the  (unitary) matrix representations sometimes.

\begin{lemma}[Conjugate representation] \label{lem214}
Let $\psi :G \to GL_n (\mathbb{C})$ be a representation of  group $G$. 
Define $\overline{\psi }: G \to GL_n(\mathbb{C})$ by 
$\overline{\psi}_g =\overline{\psi_g}$.  
Then $\overline{\psi}$ is a representation and 
$\chi_{\overline{\psi}}(g)=\overline{\chi_{\psi}(g)}$. 
\end{lemma}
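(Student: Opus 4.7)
The plan is to verify the two claims by direct computation, exploiting the fact that entrywise complex conjugation is a ring homomorphism on $M_{n\times n}(\mathbb{C})$; specifically, $\overline{AB}=\overline{A}\cdot\overline{B}$ and $\overline{A^{-1}}=(\overline{A})^{-1}$ for invertible $A$, and $\tr(\overline{A})=\overline{\tr(A)}$.

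First I would check that $\overline{\psi}$ actually maps into $GL_n(\mathbb{C})$. Since $\psi_g$ is invertible, $\overline{\psi_g}$ is invertible with inverse $\overline{\psi_g^{-1}}$, so $\overline{\psi}_g\in GL_n(\mathbb{C})$ for each $g\in G$. Next I would verify the homomorphism property. For any $g_1,g_2\in G$,
\[
\overline{\psi}_{g_1g_2}=\overline{\psi_{g_1g_2}}=\overline{\psi_{g_1}\psi_{g_2}}=\overline{\psi_{g_1}}\cdot\overline{\psi_{g_2}}=\overline{\psi}_{g_1}\overline{\psi}_{g_2},
\]
where the second equality uses that $\psi$ is a homomorphism and the third uses compatibility of conjugation with matrix multiplication. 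This establishes that $\overline{\psi}$ is a representation.

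For the character identity, I would simply observe that the $(i,i)$ entry of $\overline{\psi_g}$ is the conjugate of the $(i,i)$ entry of $\psi_g$, so
\[
\chi_{\overline{\psi}}(g)=\tr(\overline{\psi}_g)=\sum_{i=1}^{n}\overline{(\psi_g)_{ii}}=\overline{\sum_{i=1}^{n}(\psi_g)_{ii}}=\overline{\tr(\psi_g)}=\overline{\chi_{\psi}(g)}.
\]

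There is no genuine obstacle here; the lemma is essentially a compatibility check. The only point worth a line of care is justifying $\overline{AB}=\overline{A}\cdot\overline{B}$ (which follows from linearity of conjugation together with $\overline{zw}=\overline{z}\cdot\overline{w}$ applied entry-by-entry), and then both claims follow in one step each.
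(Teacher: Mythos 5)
Your proof is correct and follows essentially the same route as the paper: verify the homomorphism property using $\overline{AB}=\overline{A}\cdot\overline{B}$ and obtain the character identity from $\tr(\overline{A})=\overline{\tr(A)}$. Your added check that $\overline{\psi_g}$ is invertible (so that $\overline{\psi}$ really lands in $GL_n(\mathbb{C})$) is a small point the paper leaves implicit, but otherwise the arguments coincide.
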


\begin{proof}
We first can see that $\overline {\psi} $ is a homomorphism, 
since for every $g_1, g_2 \in G$, \\
then   $\overline {\psi}_{(g_1g_2)}=\overline{\psi_{(g_1g_2)}}
=\overline{\psi_{g_1} }\overline{\psi_{g_2}} 
=\overline{\psi_{g_1}} \cdot \overline{\psi_{g_2}} 
=\overline{\psi}_{g_1}\cdot\overline{\psi}_{g_2}$, 
where the second equality follows by noting that $\psi$ is a representation.  
Hence $\overline{\psi}$ is a representation.  
Furthermore, we have 
$\chi_{\overline{\psi}}(g)=\tr(\overline{\psi}_g)
=\tr(\overline{\psi_g})=\overline{\tr(\psi_g)}=\overline{\chi_{\psi}(g)}$. 
\end{proof}

\begin{lemma}[Tensor representation]  \label{lem215}
Let $\varphi: G \to GL_m(\mathbb C)$ and $\psi: G \to GL_n(\mathbb C)$
be representations of  $G$. 
Define $\varphi \otimes \psi: G \to GL( \mathbb C^m \otimes \mathbb C^n)$ by 
\[ (\varphi \otimes \psi)_g (x \otimes y)=(\varphi_g(x)) \otimes (\psi_g (y)).\] 
Then $\varphi \otimes \psi$ is a representation  and 
$\chi_{\varphi \otimes \psi}(g) =\chi_{\varphi}(g)\chi_{\psi}(g)$. 
\end{lemma}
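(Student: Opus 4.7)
The plan is to verify three things in order: well-definedness and linearity of each operator $(\varphi\otimes\psi)_g$ on the full tensor product, the homomorphism property in $g$, and finally the character identity. Because the rule $(\varphi\otimes\psi)_g(x\otimes y)=\varphi_g(x)\otimes\psi_g(y)$ is specified only on simple tensors, I would first invoke the universal property of the tensor product: the bilinear map $(x,y)\mapsto \varphi_g(x)\otimes\psi_g(y)$ extends uniquely to a linear endomorphism of $\mathbb{C}^m\otimes\mathbb{C}^n$. Invertibility is immediate since $\varphi_{g^{-1}}\otimes\psi_{g^{-1}}$ furnishes a two-sided inverse on simple tensors and hence everywhere by linearity.

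Next I would check the homomorphism property. It suffices to verify it on a spanning set of simple tensors: for $g_1,g_2\in G$ and $x\otimes y$,
\[
(\varphi\otimes\psi)_{g_1g_2}(x\otimes y)=\varphi_{g_1g_2}(x)\otimes\psi_{g_1g_2}(y)=\varphi_{g_1}\varphi_{g_2}(x)\otimes\psi_{g_1}\psi_{g_2}(y),
\]
which equals $(\varphi\otimes\psi)_{g_1}(\varphi\otimes\psi)_{g_2}(x\otimes y)$ by a direct unwinding of the definition. By linearity this identity extends to all of $\mathbb{C}^m\otimes\mathbb{C}^n$, so $\varphi\otimes\psi$ is a group homomorphism into $GL(\mathbb{C}^m\otimes\mathbb{C}^n)$.

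For the character identity, I would choose bases $e_1,\dots,e_m$ of $\mathbb{C}^m$ and $f_1,\dots,f_n$ of $\mathbb{C}^n$, so that $\{e_i\otimes f_j\}_{1\le i\le m,\,1\le j\le n}$ is a basis of $\mathbb{C}^m\otimes\mathbb{C}^n$. Writing $\varphi_g(e_i)=\sum_{k}\varphi_{ki}(g)e_k$ and $\psi_g(f_j)=\sum_{\ell}\psi_{\ell j}(g)f_\ell$, one obtains
\[
(\varphi\otimes\psi)_g(e_i\otimes f_j)=\sum_{k,\ell}\varphi_{ki}(g)\psi_{\ell j}(g)\,e_k\otimes f_\ell,
\]
so the diagonal matrix entries of $(\varphi\otimes\psi)_g$ in this basis are $\varphi_{ii}(g)\psi_{jj}(g)$. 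Summing over all pairs $(i,j)$ yields
\[
\chi_{\varphi\otimes\psi}(g)=\sum_{i=1}^{m}\sum_{j=1}^{n}\varphi_{ii}(g)\psi_{jj}(g)=\left(\sum_{i=1}^{m}\varphi_{ii}(g)\right)\left(\sum_{j=1}^{n}\psi_{jj}(g)\right)=\chi_{\varphi}(g)\chi_{\psi}(g),
\]
the desired identity. The only delicate point is invoking the universal property cleanly to justify that the formula on simple tensors truly defines a linear map; everything else is a direct bookkeeping calculation modeled closely on the proof of the direct-sum lemma already in the paper.
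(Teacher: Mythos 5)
Your proof is correct and follows essentially the same route as the paper: verify the homomorphism property on simple tensors and then compute the character as the product of traces. You merely supply extra detail the paper omits (the universal-property justification of well-definedness and the explicit diagonal-entry computation of $\tr(\varphi_g\otimes\psi_g)$), which is sound but not a different approach.
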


\begin{proof}
For any $g_1,g_2 \in G$, we have 
\begin{align*}
(\varphi \otimes \psi)_{g_1g_2}(x \otimes y) &=(\varphi_{g_1g_2}(x)) \otimes( \psi_{g_1g_2}(y))\\
&=(\varphi_{g_1}\varphi_{g_2}(x) )\otimes (\psi_{g_1}\psi_{g_2}(y))\\
&=(\varphi_{g_1} \otimes \psi_{g_1})\bigl( (\varphi_{g_2}(x))\otimes(\psi_{g_2}(y)) \bigr)\\
&=(\varphi \otimes \psi)_{g_1}(\varphi \otimes \psi)_{g_2}(x\otimes y).
\end{align*}
Therefore, $ (\varphi \otimes \psi)_{g_1g_2}=
(\varphi \otimes \psi)_{g_1}(\varphi \otimes \psi)_{g_2}$, 
hence $\varphi \otimes \psi$ is a representation. 
Moreover, we have 
\[  \chi_{\varphi \otimes \psi} (g) =\tr(\varphi_g \otimes  \psi_g)=\tr(\varphi_g)\tr(\psi_g)
=\chi_{\varphi} (g)\chi_{\psi}(g). \]
Sometimes, we may write 
$\chi_{\varphi \otimes \psi}=\chi_{\psi}\chi_{\varphi}=\chi_{\varphi}\chi_{\psi}$ in symbol.
\end{proof} 

\begin{theorem} \label{thm216}
Let $\varphi : G \to U_m(\mathbb C)$ and $\psi : G \to U_n( \mathbb C)$ 
be unitary matrix representations of a finite group $G$ 
and let $V=M_{m \times n}(\mathbb C)$. 
If  $\rho : G \to GL(V)$ is defined by $\rho_g (A)
=\varphi_g A \psi^{-1}_{g}$ for every $A\in V$, 
then $\rho \sim \overline{\psi} \otimes \varphi $. 
Consequently, we get $\chi_{\rho}(g)=\overline{\chi_{\psi}(g)} \chi_{\varphi}(g)$. 
\end{theorem}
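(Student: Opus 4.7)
The plan is to build an explicit linear isomorphism $T:\mathbb{C}^{n}\otimes\mathbb{C}^{m}\to V=M_{m\times n}(\mathbb{C})$ that intertwines $\overline{\psi}\otimes\varphi$ with $\rho$. The natural candidate comes from identifying a pure tensor with a rank-one matrix: on decomposable elements set
\[ T(y\otimes x)=xy^{\top},\qquad y\in\mathbb{C}^{n},\ x\in\mathbb{C}^{m}, \]
and extend by linearity. Because $T(e_{j}\otimes e_{i})=e_{i}e_{j}^{\top}=E_{ij}$ for the standard bases, $T$ sends a basis of $\mathbb{C}^{n}\otimes\mathbb{C}^{m}$ bijectively to the standard basis of $M_{m\times n}(\mathbb{C})$; thus $T$ is a linear isomorphism, and this verification is the easy bookkeeping step.

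The substantive step is checking $T\circ(\overline{\psi}\otimes\varphi)_{g}=\rho_{g}\circ T$ for every $g\in G$. By linearity it suffices to test this on pure tensors. For $y\otimes x$,
\[
T\bigl((\overline{\psi}\otimes\varphi)_{g}(y\otimes x)\bigr)
=T\bigl(\overline{\psi}_{g}y\otimes\varphi_{g}x\bigr)
=(\varphi_{g}x)(\overline{\psi}_{g}y)^{\top}
=\varphi_{g}(xy^{\top})\overline{\psi_{g}}^{\top}.
\]
Here the key algebraic move is $\overline{\psi_{g}}^{\top}=\psi_{g}^{*}$, and because $\psi$ is a unitary representation, $\psi_{g}^{*}=\psi_{g}^{-1}$. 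Substituting gives $\varphi_{g}(xy^{\top})\psi_{g}^{-1}=\rho_{g}(xy^{\top})=\rho_{g}T(y\otimes x)$, which is exactly the intertwining identity. I expect this to be the only place where any care is required, since one must keep track of the transpose-versus-conjugate conventions and use unitarity in precisely the right spot; miscounting them is the main pitfall.

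Once this is in place, the commuting diagram $T\circ(\overline{\psi}\otimes\varphi)_{g}=\rho_{g}\circ T$ together with invertibility of $T$ witnesses $\rho\sim\overline{\psi}\otimes\varphi$. For the character consequence, I would invoke that equivalent representations share a character, together with Lemma \ref{lem214} (giving $\chi_{\overline{\psi}}=\overline{\chi_{\psi}}$) and Lemma \ref{lem215} (giving $\chi_{\alpha\otimes\beta}=\chi_{\alpha}\chi_{\beta}$), so that
\[ \chi_{\rho}(g)=\chi_{\overline{\psi}\otimes\varphi}(g)=\chi_{\overline{\psi}}(g)\chi_{\varphi}(g)=\overline{\chi_{\psi}(g)}\chi_{\varphi}(g), \]
recovering and generalizing the formula already observed in Lemma \ref{lem5}.
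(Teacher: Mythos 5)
Your proposal is correct and follows essentially the same route as the paper: the same intertwining map sending $y\otimes x$ to the rank-one matrix $xy^{\top}$ (the paper writes it columnwise as $T(v\otimes w)e_j=(v^{\top}e_j)w$), the same use of unitarity via $\overline{\psi_g}^{\top}=\psi_g^{-1}$ at the key step, and the same appeal to Lemmas \ref{lem214} and \ref{lem215} for the character formula. Your explicit check that $T$ maps the basis $e_j\otimes e_i$ to $E_{ij}$, hence is an isomorphism, is a small point the paper leaves implicit, but the argument is the same.
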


\begin{proof}
Define a map $T:\mathbb C^n \otimes \mathbb{C}^m \to M_{m \times n}(\mathbb C)$ by 
setting 
$T(v \otimes w)$$=(v_1w, v_2w, \dots, v_nw)$ for 
all $v=(v_i)\in \mathbb{C}^n$ and $w \in \mathbb{C}^m$, i.e., 
$T(v \otimes w) \cdot e_j=v_jw=(v^{\top} e_j)w$,
in fact, for any $x \in \mathbb C^n$, we have $T(v \otimes w) x=(v^\top x)w$.

Our goal is to show that $\rho_g T=T(\overline \psi \otimes \varphi)_g$ for every $g\in G$. 
In other words,  for any $v\in \mathbb{C}^n$ and $w\in \mathbb{C}^m$,  it is sufficient to prove 
\[ \rho_g T(v \otimes w) =T(\overline \psi \otimes \varphi)_g (v \otimes w). \]
For each $e_j \in \mathbb{C}^n$, we have 
\begin{align*}
T(\overline \psi \otimes \varphi)_g (v \otimes w) {e_j}
&=T(\overline\psi_g(v) \otimes \varphi_g(w))e_j 
=\bigl( (\overline \psi_g(v))^\top e_j\bigr)\varphi_g(w)\\
&=(v^\top (\overline{\psi_g})^\top e_j) \varphi _g(w)=(v^\top \psi_g^{-1}e_j)\varphi_g(w)\\
&=\varphi_g (v^\top \psi_g^{-1}e_j) w=\varphi_g T(v \otimes w)(\psi_g^{-1} e_j)\\
&=\rho_gT(v \otimes w) e_j. 
\end{align*}
Hence, the required equality immediately holds. 

Moreover, by Lemma \ref{lem214} and Lemma \ref{lem215}, we obtain 
\[ \chi_{\rho}(g)=\chi_{\overline{\psi} \otimes \varphi}(g) 
=\chi_{\overline{\psi} }(g)\chi_{\varphi }(g)=\overline{\chi_{\psi}(g)}\chi_{\varphi}(g). \] 
This complete the proof. 
\end{proof}

\section{Proofs of orthogonality relations}

\begin{proposition} \label{prop31}
Let $\varphi: G \to U_m(\mathbb C)$ and $\psi: G \to U_n(\mathbb C)$
be unitary representations of  $G$. 
Define $P:\ho (\mathbb C^m, \mathbb C^n)\to\ho (\mathbb C^m, \mathbb C^n)$, by setting
\[ P(T)=\frac{1}{|G|}\sum_{g\in G}\psi_g T \varphi_g^{-1}.\] 
Then $P$ is a projection with respect to the subspace $\ho_G(\varphi, \psi)$. 
\end{proposition}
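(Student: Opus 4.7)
The plan is to verify the three standard properties that characterize a projection onto $\ho_G(\varphi,\psi)$: namely that $P$ is linear, that $\mathrm{Im}(P)\subseteq \ho_G(\varphi,\psi)$, and that $P$ restricts to the identity on $\ho_G(\varphi,\psi)$. Together the last two conditions force $P^2=P$ and $\mathrm{Im}(P)=\ho_G(\varphi,\psi)$, which is exactly what it means for $P$ to be a projection onto this subspace. Linearity is immediate from the definition, since $T\mapsto \psi_g T\varphi_g^{-1}$ is linear for each fixed $g$ and a finite sum of linear maps is linear.

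For the containment $\mathrm{Im}(P)\subseteq \ho_G(\varphi,\psi)$, I would fix an arbitrary $h\in G$ and $T\in \ho(\mathbb{C}^m,\mathbb{C}^n)$ and compute
\[ \psi_h P(T)\varphi_h^{-1} = \frac{1}{|G|}\sum_{g\in G}\psi_h\psi_g T\varphi_g^{-1}\varphi_h^{-1} = \frac{1}{|G|}\sum_{g\in G}\psi_{hg} T\varphi_{(hg)^{-1}}. \]
Applying the change of variable $x=hg$, which is a bijection on $G$ for fixed $h$ (the same trick used in Lemma \ref{lem4}), the right-hand side equals $P(T)$. Thus $\psi_h P(T)=P(T)\varphi_h$ for every $h\in G$, which is precisely the condition $P(T)\in \ho_G(\varphi,\psi)$.

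For the identity property on $\ho_G(\varphi,\psi)$, I would take $T\in \ho_G(\varphi,\psi)$, so that $T\varphi_g=\psi_g T$, equivalently $\psi_g T\varphi_g^{-1}=T$. Summing over $g$ and dividing by $|G|$ gives $P(T)=T$. Combining this with the previous step, for any $T\in \ho(\mathbb{C}^m,\mathbb{C}^n)$ we have $P(T)\in \ho_G(\varphi,\psi)$ and hence $P(P(T))=P(T)$, so $P^2=P$. The range is exactly $\ho_G(\varphi,\psi)$ because $\mathrm{Im}(P)\subseteq \ho_G(\varphi,\psi)$ and every element of $\ho_G(\varphi,\psi)$ is a fixed point of $P$ (hence lies in the image).

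There is no real obstacle; the only subtle point is the change-of-variable step, which must be carried out carefully because one needs the substitution to simultaneously convert both $\psi_h\psi_g$ and $\varphi_g^{-1}\varphi_h^{-1}$ into $\psi_x$ and $\varphi_x^{-1}$ with the same $x=hg$. The rest is bookkeeping with the definition of morphism.
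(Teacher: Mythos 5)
Your proof is correct and follows essentially the same route as the paper: show $P(T)$ satisfies the morphism condition via the change of variable $x=hg$, show $P$ fixes $\ho_G(\varphi,\psi)$ pointwise, and conclude $P^2=P$. The only cosmetic difference is that you conjugate $P(T)$ by $\psi_h$ and $\varphi_h^{-1}$ directly, while the paper first rewrites the sum with $g\mapsto g^{-1}$ and then computes $P(T)\varphi_h$; both are the same averaging argument.
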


\begin{proof}
We first verify that  
$P(T) \in \ho_G(\varphi, \psi)$ for every $T \in \ho (\mathbb C^m, \mathbb C^n)$. 
Notice that $P(T)=\frac{1}{|G|}\sum_{g\in G}\psi_g T \varphi_g^{-1}
=\frac{1}{|G|}\sum_{g\in G}\psi_g^{-1} T \varphi_g$,
then by a direct computation, we obtain 
\begin{align*}
P(T)\varphi_h &=\frac{1}{|G|}\sum_{g\in G}\psi_g^{-1} T \varphi_g\varphi_h 
=\frac{1}{|G|}\sum_{g\in G}  \psi_g^{-1} T \varphi_{gh}\\
&=\frac{1}{|G|}\sum_{x\in G}  \psi_{xh^{-1}}^{-1} T \varphi_{x} 
=\frac{1}{|G|}\sum_{x\in G}  \psi_{hx^{-1}} T \varphi_{x} \\
&=\frac{1}{|G|}\sum_{g\in G}\psi_{hg}T \varphi_{g}^{-1} 
=\psi_h\frac{1}{|G|}\sum_{g\in G}\psi_{g}T \varphi_{g}^{-1}
=\psi_h P(T)
\end{align*}
for all $g\in G$. This proves $P(T) \in \ho_G(\varphi, \psi)$.

We next  prove that $P(T)=T$ for every $T \in \ho_G(\varphi, \psi) $ just by noting 
\[ P(T)=\frac{1}{|G|}\sum_{g\in G}\psi_g T \varphi_g^{-1}
=\frac{1}{|G|}\sum_{g\in G} T\varphi_g\varphi_g^{-1}=T.\]
Finally, we show $P$ is idempotent, i.e.,  $P^2=P$. 
For any $T\in \ho (\mathbb{C}^m, \mathbb{C}^n)$, 
it is easy to see that
$P^2(T)=P(P(T))=P(T)$, since $P(T)\in \ho_{G}(\varphi, \psi)$ 
and $P(S)=S$ for all $S\in \ho_G (\varphi ,\psi)$. 
This complete the proof. 
\end{proof}

{\bf Remark.}~~
By Proposition \ref{prop31} and Lemma \ref{lem3}, we get 
\[  \dim \ho_G (\varphi ,\psi )=\dim \mathrm {Im}P=\mathrm{rank} (P)=\tr (P).\]
Let $E_{11}, E_{12},\dots, E_{nm}$ be a basis of $\ho(\mathbb C^m, \mathbb C^n)$. 
Then 
\begin{align*} 
P(E_{ij}) &=\frac{1}{|G|} \sum\limits_{g\in G} \psi_g E_{ij} \varphi_g^{-1} 
= \frac{1}{|G|} \sum\limits_{g\in G} (\psi_g e_i)(e_j^{\top} \varphi_g^{-1})  \\
&= \frac{1}{|G|} \sum\limits_{g\in G} \sum\limits_{k=1}^n \sum\limits_{\ell =1}^m 
\psi_{ki}(g) (\varphi_g^{-1})_{j\ell} E_{k\ell}. 
\end{align*}
Therefore, we obtain 
\begin{align*}
\tr(P)&=\sum_{i=1}^{n} \sum_{j=1}^{m} 
\left( \frac{1}{|G|} \sum _{g\in G}\psi_{ii}(g)\varphi_{jj}(g^{-1}) \right)
=\frac {1}{|G|}\sum_{g \in G}\tr(\psi_g)\tr (\varphi_g^{-1})\\
&=\frac {1}{|G|}\sum_{g \in G}\tr(\psi_g)\tr (\overline{\varphi_g}^\top)
=\frac {1}{|G|}\sum_{g \in G}\chi_{\psi}(g) \overline{\chi_{\varphi}(g)} 
=\langle \chi_{\psi}, \chi_{\varphi} \rangle.
\end{align*}
Thus, we get $\dim \ho_G (\varphi ,\psi )=\langle \chi_{\psi}, \chi_{\varphi} \rangle
=\langle \chi_{\varphi}, \chi_{\psi} \rangle$.

\begin{corollary}[First orthogonality relation]
Let $\varphi $ and $\psi $ be two irreducible representations of a finite group G. Then
\[ \langle \chi_{\varphi}, \chi_{\psi} \rangle  =
 \begin{cases}                                    
1, & {\varphi \sim \psi,} \\
0, &  {\varphi \nsim \psi.}
\end{cases} 
\]
\end{corollary}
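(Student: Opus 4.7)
The plan is to combine Theorem \ref{thm1} with Corollary \ref{cor32}, since together they already encode the statement we want to prove. Theorem \ref{thm1} gives the identity
\[ \dim \ho_G(\psi, \varphi) = \langle \chi_\psi, \chi_\varphi \rangle \]
for any pair of representations of $G$, while Corollary \ref{cor32} computes the left-hand side explicitly in the irreducible case.

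Concretely, I would first invoke Theorem \ref{thm1} to translate the inner product $\langle \chi_\varphi, \chi_\psi \rangle$ into the dimension of a morphism space. Using Proposition \ref{lem7} (symmetry of the inner product on characters) or equivalently Proposition \ref{lem6}, this can be identified with $\dim \ho_G(\psi, \varphi)$ without loss of generality. Then I would split into the two cases required by the statement: if $\varphi \nsim \psi$, Schur's Lemma \ref{schur}(1) or directly Corollary \ref{cor32} gives $\ho_G(\psi, \varphi) = \{0\}$, hence the inner product equals $0$; if $\varphi \sim \psi$, Corollary \ref{cor32} asserts $\dim \ho_G(\psi, \varphi) = 1$, hence the inner product equals $1$.

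As an alternative, one could bypass Theorem \ref{thm1} and use the remark following Proposition \ref{prop31} instead, which independently establishes $\dim \ho_G(\varphi, \psi) = \langle \chi_\psi, \chi_\varphi \rangle$ via the projection $P$. Either route immediately reduces the orthogonality relation to the already-established dimension computation for irreducible morphism spaces.

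There is essentially no obstacle here: all of the substantive work (Schur's lemma, the projection operator, the character formula via the fixed subspace) has been carried out in the preceding sections. The proof is a one-line application of Theorem \ref{thm1} and Corollary \ref{cor32}, and the only care needed is to match the order of arguments in the inner product correctly, which is handled by Proposition \ref{lem7}.
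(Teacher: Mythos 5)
Your proposal is correct and follows essentially the same route as the paper, which likewise proves the statement by combining Theorem \ref{thm1} with Corollary \ref{cor32} (with the symmetry of the character inner product handled by Proposition \ref{lem7}). The alternative path you mention via the remark after Proposition \ref{prop31} is also valid but not needed.
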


\begin{proof}
Combining Theorem \ref{thm1} and Corollary \ref{cor32}, we get 
\[ \langle \chi_{\varphi}, \chi_{\rho}\rangle=\dim \ho_G(\varphi, \psi) =
\begin{cases}
1, & {\varphi \sim \psi,} \\
0, &  {\varphi \nsim \psi.}
\end{cases}
\]
Thus, the first orthogonality relation immediately follows.
\end{proof}

\begin{corollary}[Schur's orthogonality relation]
Let $\varphi : G \to U_m(\mathbb C)$ and $\psi :G \to U_n(\mathbb C)$ 
be inequivalent irreducible unitary  representations. Then \\
(1) $\langle \psi_{ki}, \varphi_{\ell j} \rangle=0$ for all $i,j,k,\ell$;\\
(2) $\langle \psi_{ki}, \psi_{\ell j} \rangle = 
\begin{cases}
1/n, & \text{ if $k=\ell $ and $i=j$}, \\
0 ,& \text{otherwise}. 
\end{cases} $
\end{corollary}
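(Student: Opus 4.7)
The plan is to deduce both parts of Schur's orthogonality relation from the projection $P$ of Proposition \ref{prop31}, read off in the basis $\{E_{ij}\}$ of elementary matrices, combined with Schur's Lemma \ref{schur} (and Corollary \ref{cor32}) to pin down $\ho_G$ in each of the two cases $\varphi \nsim \psi$ and $\varphi = \psi$. The computation already carried out in the remark after Proposition \ref{prop31}, namely
\[ P(E_{ij}) = \frac{1}{|G|}\sum_{g\in G}\sum_{k=1}^{n}\sum_{\ell=1}^{m}\psi_{ki}(g)(\varphi_g^{-1})_{j\ell}E_{k\ell}, \]
together with unitarity ($(\varphi_g^{-1})_{j\ell}=\overline{\varphi_{\ell j}(g)}$ and similarly for $\psi$), will let me translate entries of $P(E_{ij})$ directly into inner products $\langle\psi_{ki},\varphi_{\ell j}\rangle$. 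So the whole proof amounts to identifying $P(E_{ij})$ in two different ways.

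For part (1), I would apply Proposition \ref{prop31} to the inequivalent irreducible pair $\varphi,\psi$. By Schur's Lemma \ref{schur}, $\ho_G(\varphi,\psi)=\{0\}$, hence $P\equiv 0$ on $\ho(\mathbb{C}^m,\mathbb{C}^n)$. In particular $P(E_{ij})=0$ for every $i,j$, and matching the $(k,\ell)$-entry in the display above yields
\[ 0 = \frac{1}{|G|}\sum_{g\in G}\psi_{ki}(g)\overline{\varphi_{\ell j}(g)} = \langle\psi_{ki},\varphi_{\ell j}\rangle. \]

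For part (2), I would instead apply Proposition \ref{prop31} with $\varphi=\psi$, so $P$ projects $M_n(\mathbb{C})$ onto $\ho_G(\psi,\psi)$. Since $\psi$ is irreducible, Schur's Lemma \ref{schur} gives $\ho_G(\psi,\psi)=\mathbb{C}\cdot I_n$; hence $P(E_{ij})=c_{ij}I_n$ for some scalar $c_{ij}\in\mathbb{C}$. Reading off the $(k,\ell)$-entry of the same display then gives
\[ \langle\psi_{ki},\psi_{\ell j}\rangle = c_{ij}\,\delta_{k\ell}, \]
so the only thing left is to compute $c_{ij}$. For this I would take the trace of both sides of $P(E_{ij})=c_{ij}I_n$: on one hand $\tr(P(E_{ij}))=n\,c_{ij}$, on the other hand
\[ \tr(P(E_{ij})) = \frac{1}{|G|}\sum_{g\in G}\sum_{k=1}^{n}\psi_{ki}(g)(\psi_g^{-1})_{jk} = \frac{1}{|G|}\sum_{g\in G}(\psi_g^{-1}\psi_g)_{ji} = \delta_{ij}, \]
which forces $c_{ij}=\delta_{ij}/n$ and completes (2).

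The mildly delicate step is the trace computation, since one must be careful about the order of indices when contracting $\psi_{ki}(g)$ against $(\psi_g^{-1})_{jk}$; once this is set up so that summation over $k$ yields the matrix product $\psi_g^{-1}\psi_g=I_n$, the rest is bookkeeping. Everything else reduces to Schur's lemma and the explicit form of $P$ already established.
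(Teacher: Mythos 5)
Your proposal is correct and follows essentially the same route as the paper: identify $P(E_{ij})$ via Proposition \ref{prop31} in the basis $\{E_{ij}\}$, use Schur's Lemma to force $P(E_{ij})=0$ in the inequivalent case and $P(E_{ij})=c_{ij}I_n$ when $\varphi=\psi$, then determine the scalar by a trace computation. The only cosmetic difference is that you evaluate $\tr P(E_{ij})$ by contracting indices to get $(\psi_g^{-1}\psi_g)_{ji}=\delta_{ij}$, whereas the paper invokes invariance of trace under conjugation to get $\tr(E_{ij})$; both yield $c_{ij}=\delta_{ij}/n$.
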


\begin{proof}
By  Proposition \ref{prop31} and Schur's Lemma \ref{schur}, then for any $i,j$, we have 
 $P(E_{ij})\in \ho_G (\varphi ,\psi)=\{0\}$. 
By the remark of Proposition \ref{prop31}, 
then for any $k,\ell$, we have 
\[ \langle \psi_{ki}, \varphi_{\ell j} \rangle 
= \frac{1}{|G|} \sum\limits_{g\in G} \psi_{ki}(g) \overline{\varphi_{\ell j}(g)} =
\frac{1}{|G|} \sum\limits_{g\in G} \psi_{ki}(g) (\varphi_g^{-1})_{j\ell }=0. \] 
For (2), by  setting $\varphi =\psi$ in Proposition \ref{prop31}, we get 
\[ P(E_{ij}) =  \frac{1}{|G|}\sum_{g\in G}\psi_g E_{ij} \psi_g^{-1} \in \ho_G (\psi ,\psi).  \]
By Schur's Lemma \ref{schur}, then $P(E_{ij})=cI_n$ for some constant $c\in \mathbb{C}$. 
By taking trace, we have $c=\frac{1}{n}\tr P(E_{ij})=\frac{1}{n} \tr (E_{ij})$. Then 
\[ P(E_{ij}) = \sum\limits_{k=1}^n \sum\limits_{\ell =1}^m 
 \left(  \frac{1}{|G|} \sum\limits_{g\in G} 
\psi_{ki}(g) \overline{\psi_{\ell j}(g)}\right) E_{k\ell} =\frac{1}{n}\tr (E_{ij})I_n. \]
For $k\neq \ell$, we have $\langle \psi_{ki}, \psi_{\ell j} \rangle =0$; 
For $k=\ell$, we have 
\[   \frac{1}{|G|} \sum\limits_{g\in G}   
\psi_{ki}(g) \overline{\psi_{k j}(g)} =\frac{1}{n} \tr (E_{ij}).  \]
Then  $\langle \psi_{ki}, \psi_{k j} \rangle =\frac{1}{n}$ for $i=j$ and 
$\langle \psi_{ki}, \psi_{\ell j} \rangle =0$ for $i\neq j$. 
\end{proof}

\section*{Acknowledgments}
All authors are grateful for valuable comments from the referee, 
which considerably improve the presentation of our manuscript.
This work was supported by  NSFC (Grant No. 11671402, 11871479),  
Hunan Provincial Natural Science Foundation (Grant No. 2016JJ2138, 2018JJ2479) 
and  Mathematics and Interdisciplinary Sciences Project of CSU.

\end{document}